\documentclass[a4paper,11pt,fleqn]{article}

\usepackage{amsmath,amssymb,amsthm}

\flushbottom
\allowdisplaybreaks

\setlength{\textwidth}{160.0mm}
\setlength{\textheight}{245.0mm}
\setlength{\oddsidemargin}{0mm}
\setlength{\evensidemargin}{0mm}
\setlength{\topmargin}{-15mm} 
\setlength{\parindent}{5.0mm}

\tolerance=9999
\relpenalty=9000
\binoppenalty=8000

\marginparwidth=17mm \marginparsep=1mm \marginparpush=4mm
\newcommand{\todo}[1][\null]{\ensuremath{\clubsuit}}

\newcommand{\noprint}[1]{}

\newcommand{\Id}{\mathrm{Id}}

\newcommand{\lsemioplus}{\mathbin{\mbox{$\lefteqn{\hspace{.77ex}\rule{.4pt}{1.2ex}}{\in}$}}}

\newtheorem{theorem}{Theorem}
\newtheorem{theorem*}{Theorem}
\newtheorem{lemma}{Lemma}
\newtheorem{corollary}{Corollary}
\newtheorem{proposition}{Proposition}
{\theoremstyle{definition} 
\newtheorem{definition}{Definition}
\newtheorem{definition*}{Definition}
\newtheorem{example}{Example}

}

\begin{document}

\par\noindent {\LARGE\bf
Lie-orthogonal operators\par}

{\vspace{4mm}\par\noindent 
Dmytro R. POPOVYCH
\par\vspace{2mm}\par}

{\vspace{2mm}\par\it
\noindent Faculty of Mechanics and Mathematics, National Taras Shevchenko University of Kyiv,\\
building 7, 2, Academician Glushkov prospectus, Kyiv, Ukraine, 03127
\par}

{\vspace{2mm}\par\noindent\rm E-mail: \it  deviuss@gmail.com
 \par}

{\vspace{7mm}\par\noindent\hspace*{5mm}\parbox{150mm}{\small
Basic properties of Lie-orthogonal operators on a finite-dimensional Lie algebra are studied.
In particular, the center, the radical and the components of the ascending central series prove to be invariant with respect
to any Lie-orthogonal operator. 
Over an algebraically closed field of characteristic 0, 
only solvable Lie algebras with solvability degree not greater than two admit 
Lie-orthogonal operators whose all eigenvalues differ from~$1$ and~$-1$.
The main result of the paper is that Lie-orthogonal operators on a simple Lie algebra are exhausted by the trivial ones.
This allows us to give the complete description of Lie-orthogonal operators for semi-simple and reductive algebras,
as well as a preliminary description of Lie-orthogonal operators on Lie algebras with nontrivial Levi--Mal'tsev decomposition.
The sets of Lie-orthogonal operators of some classes of Lie algebras (Heisenberg algebras, almost Abelian algebras, etc.)
are directly computed. 
In particular, it appears that the group formed by the equivalence classes of Lie-orthogonal operators on a Heisenberg algebra is isomorphic 
to the standard symplectic group of an appropriate dimension. 
}\par\vspace{5mm}}

\noindent 
{\bf Keywords:} \
Lie algebras; Lie-orthogonal operators; generalized differentiations

\vspace{2mm}

\noindent 
{\bf MSC:} \ 17B81; 17B70

\section{Introduction}

The condition of Lie orthogonality of operators defined on Lie algebras naturally arises in the course of the study of certain structures connected to Lie algebras
such as K\"ahler manifolds and Clifford structures~\cite{Barberis&Dotti2004, Barberis&Dotti&Miatello1995, Dotti&Fino2000, Grantcharov&Poon2000}.
The simplest among these structures are Abelian complex structures on real Lie algebras~\cite{Barberis&Dotti2004, Barberis&Dotti&Miatello1995, Dotti&Fino2000},
i.e., linear operators satisfying the following two conditions:
\begin{gather*}
1)\ J^2=-\Id, \\
2)\ [Jx,Jy]=[x,y].
\end{gather*}
Lie algebras carrying such structures are exhaustively classified only up to dimension six~\cite{Andrada&Barberis&Dotti2011}. 

The object studied in the present paper is the class of operators satisfying the second condition alone, which is called the condition of Lie orthogonality.
The renunciation of the first condition helps us to understand what properties of Abelian complex structures are implied by the second condition.
For example, Proposition~3.1 from~\cite{Barberis&Dotti2004} directly follows from Theorem~\ref{TheoremOnSolvabilityOfLieAlgWithLieOrtOp} of the present paper.

Moreover, Lie-orthogonal operators are also related to a particular case of so-called generalized differentiations in the same way as
automorphisms are related to usual differentiations. Recall that a linear operator~$D\in \mathrm{End}(L)$ is called 
a generalized $(\alpha, \beta, \gamma)$-differentiation on a Lie algebra~$L$ if for fixed constants $\alpha$,~$\beta$ and~$\gamma$
the equation $\alpha D[x,y]=\beta[Dx,y]+\gamma[x, Dy]$ holds for any elements~$x$ and~$y$ from the algebra~$L$~\cite{Novotny&Hrivnyak2008}.
Lie-orthogonal operators are associated with $(0,1,1)$-differentiations.

In this paper we enhance and generalize results from~\cite{Bilun&Maksimenko&Petravchuk2011,Petravchuk&Bilun2003} 
and completely describe the sets of Lie-orthogonal operators for certain classes of Lie algebras.

The paper has the following structure: 
The simplest notions and properties related to Lie-orthogonality are considered in Section~\ref{SectionOnElemPropsLieOrtOps}. 
Elementary algebraic structures on sets of such operators are also discussed.
Actions of Lie-orthogonal operators on the center and the radical of a Lie algebra
as well as on ideals generated by the corresponding generalized eigenspaces and the center are studied
in Section~\ref{SectionOnInvIdealsOfLieOrtOps}. 
In the same section we introduce the notion of equivalence of Lie-orthogonal operators,
which is especially important in the course of the consideration of Lie algebras with nonzero centers.
Lie-orthogonal automorphisms are also studied.
It is proved in Section~\ref{SectionOnLieOrtOpsOnNonzeroLeviFactor} that any simple Lie algebra
possesses only the trivial Lie-orthogonal operators. 
Using this fact, we completely describe Lie-orthogonal operators on semi-simple and reductive Lie algebras
and derive some results for general Lie algebras.
The basis approach is applied in Section~\ref{SectionOnDirectCalcOfLieOrtOps} to the direct calculation
of the sets of Lie-orthogonal operators for important classes of Lie algebras 
including the special linear algebras, Heisenberg algebras, the almost Abelian algebras, etc.

\section{Elementary properties of Lie-orthogonal operators}\label{SectionOnElemPropsLieOrtOps}

In this section we consider an arbitrary Lie algebra $L$ 
as the imposed restrictions on the dimension and the underlying field are not essential.

\subsection{Definitions and examples}

\begin{definition}
A linear operator~$J$ on~$L$ is called \emph{Lie-orthogonal} if $[Jx, Jy]=[x, y]$ for any $x, y \in L$. 
\end{definition}

\begin{example}
Each linear operator on an Abelian Lie algebra is Lie-orthogonal.
The zero operator is Lie-orthogonal if and only if the corresponding Lie algebra is Abelian.
\end{example}

\begin{example}
For any Lie algebra $L$, the identity operator $\Id_L$ as well as $-\Id_L$ are Lie-orthogonal over~$L$.
We will call these operators \emph{trivial} Lie-orthogonal operators on~$L$.
Only the Lie-orthogonal operators which are different from the trivial ones are of interest.
\end{example}

For certain Lie algebras the associated sets of Lie-orthogonal operators are exhausted by the trivial operators.

\begin{example}
Consider the Lie algebra $L=\mathrm{sl}_2$. We choose a basis $\{e_1, e_2, e_3\}$ such that the associated commutation relations are
$[e_1, e_2]=e_3$, $[e_2, e_3]=e_1$, $[e_3, e_1]=e_2$, i.e.\ $[e_i, e_j]=e_k$ for an even transposition of the indices $i$, $j$, $k$.
In other words, we use the representation of the algebra $\mathrm{sl}_2$ as the algebra of three-dimensional vectors with the vector product as a Lie bracket.
Let $J$~be a Lie-orthogonal operator on $\mathrm{sl}_2$.
By definition, $e_k=[e_i, e_j]=[Je_i, Je_j]$. Hereby the vector $e_k$ is orthogonal to the vectors $Je_i$ and $Je_j$. 
Recomposing these relations, we obtain that for any $i$ the vector $Je_i$ is orthogonal to $e_j$ and $e_k$, and hence it is proportional to
$e_i$: $Je_i=\lambda_ie_i$ for some constant~$\lambda_i$.
The commutation relations imply that $\lambda_i\lambda_j=1$, $i\ne j$, from which either $\lambda_1=\lambda_2=\lambda_3=1$ or $\lambda_1=\lambda_2=\lambda_3=-1$. 
This means that either $J=\Id_L$ or $J=-\Id_L$.
Therefore, the algebra $\mathrm{sl}_2$ admits only the trivial Lie-orthogonal operators.
\end{example}

\begin{proposition}
Suppose that $L=L_1\oplus \dots \oplus L_k$, i.e., $L$ is a direct sum of its ideals $L_1$, \ldots, $L_k$, and $J_i$ is a Lie-orthogonal operator on~$L_i$, $i=1,\dots,k$.
Then the operator $J=J_1\oplus \dots \oplus J_k$ is Lie-orthogonal on~$L$.
\end{proposition}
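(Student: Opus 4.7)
The plan is to verify the defining equality $[Jx,Jy]=[x,y]$ directly by decomposing arbitrary elements along the given direct sum. First I would take arbitrary $x,y\in L$ and write them uniquely as $x=x_1+\dots+x_k$ and $y=y_1+\dots+y_k$ with $x_i,y_i\in L_i$. Since the $L_i$ are ideals whose sum is direct, we have $[L_i,L_j]\subseteq L_i\cap L_j=\{0\}$ for $i\ne j$, so all cross-bracket terms vanish and
\[
[x,y]=\sum_{i,j}[x_i,y_j]=\sum_{i=1}^{k}[x_i,y_i].
\]

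Next I would use the definition of the direct sum of operators to write $Jx=\sum_i J_ix_i$ and $Jy=\sum_i J_iy_i$, noting that $J_ix_i\in L_i$ since each $J_i$ acts on $L_i$. Applying the same cross-bracket vanishing argument, I would obtain
\[
[Jx,Jy]=\sum_{i=1}^{k}[J_ix_i,J_iy_i].
\]
Finally, invoking the Lie-orthogonality of each $J_i$ on $L_i$ gives $[J_ix_i,J_iy_i]=[x_i,y_i]$ term by term, and summing recovers $[x,y]$.

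There is no real obstacle here: the proof is purely a bookkeeping exercise that exploits two facts, namely the orthogonality of the brackets of distinct ideals in a direct sum decomposition and the definition of the direct sum of linear operators. The only point worth stating carefully is the vanishing $[L_i,L_j]=0$ for $i\ne j$, which follows from the assumption that each $L_i$ is an ideal (not merely a subalgebra) together with the directness of the sum.
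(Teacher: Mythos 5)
Your proof is correct and complete; the paper states this proposition without proof, and your argument is exactly the routine verification one would supply, correctly using that $[L_i,L_j]\subseteq L_i\cap L_j=\{0\}$ for $i\ne j$ because the summands are ideals forming a direct sum.
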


\begin{example}
If the Lie algebra~$L$ is a direct sum of its ideals $L_1$, \ldots, $L_k$ and for each $i=1,\dots,k$ either $J_i=\Id_{L_i}$ or $J_i=-\Id_{L_i}$ 
then the operator $J=J_1\oplus \dots \oplus J_k$ is Lie-orthogonal on~$L$.
\end{example}

\subsection{Algebraic structures related to Lie-orthogonal operators}

\begin{proposition}
If $L$ is a Lie algebra, $J$ is a Lie-orthogonal operator on~$L$ and $S$~is a subalgebra of~$L$ which is invariant with respect to~$J$
then the restriction of the operator~$J$ on~$S$ is a Lie-orthogonal operator on the subalgebra~$S$.
\end{proposition}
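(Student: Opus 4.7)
The plan is to observe that this proposition reduces to an immediate verification once the hypotheses are unpacked: the only content is checking that the restriction $J|_S$ is a well-defined linear operator on $S$ and that it inherits the Lie-orthogonality identity from $J$.

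First I would note that $J$-invariance of $S$ means $J(S)\subseteq S$, so $J|_S$ is a linear operator from $S$ to itself. Next, I would pick arbitrary $x,y\in S$ and compute
\begin{equation*}
[J|_S x,\,J|_S y]_S = [Jx,Jy]_L = [x,y]_L = [x,y]_S,
\end{equation*}
where the first equality uses that $J|_S$ agrees with $J$ on $S$, the second uses that $J$ is Lie-orthogonal on $L$, and the last uses that $S$ is a subalgebra, so the bracket computed in $L$ coincides with the bracket in $S$. This establishes the defining identity of Lie-orthogonality for $J|_S$.

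There is no real obstacle here; the only point requiring any attention is the twofold use of the subalgebra hypothesis on $S$, namely that $S$ is closed under the bracket (so the right-hand side lies in $S$ and the identity is meaningful inside $S$) and that $J$-invariance is needed simply to make $J|_S$ land back in $S$. I would therefore present the proof in a single short paragraph.
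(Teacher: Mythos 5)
Your proof is correct and is exactly the intended argument: the paper states this proposition without proof, treating it as immediate, and your verification (invariance makes $J|_S$ well defined, then the identity $[Jx,Jy]=[x,y]$ specializes to elements of $S$) is the standard one-line justification.
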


\begin{proposition}
If $J$~is a Lie-orthogonal operator on a Lie algebra $L$ then the operator~$-J$ is also Lie-orthogonal on this algebra.
\end{proposition}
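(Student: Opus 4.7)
The plan is to verify the defining identity $[(-J)x,(-J)y]=[x,y]$ for all $x,y\in L$ by a direct one-line computation, exploiting the bilinearity of the Lie bracket to pull out the two scalars $-1$.

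Concretely, I would fix arbitrary $x,y\in L$ and compute
\[
[(-J)x,(-J)y]=[-Jx,-Jy]=(-1)(-1)[Jx,Jy]=[Jx,Jy],
\]
where the middle equality uses bilinearity of $[\,\cdot\,,\,\cdot\,]$. Since $J$ is assumed Lie-orthogonal, $[Jx,Jy]=[x,y]$, and the chain closes.

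There is no real obstacle here: the statement is an immediate consequence of the fact that the two signs coming from the two bracket arguments cancel, so the property of being Lie-orthogonal is preserved under negation of the operator. In more abstract terms, the set of Lie-orthogonal operators is stable under the involution $J\mapsto -J$, which will be useful later in forming the group-like structure discussed in this subsection.
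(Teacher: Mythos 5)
Your computation is correct and is exactly the one-line bilinearity argument that the paper implicitly relies on (it states this proposition without proof, treating it as immediate). Nothing is missing.
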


\begin{proposition}\label{PropositionOnLieOrthogonalityOfInverseOperator}
Let $J$~be a non-degenerate Lie-orthogonal operator on a Lie algebra~$L$. Then the inverse $J^{-1}$ of $J$ is also a Lie-orthogonal operator on the algebra $L$.
The condition of Lie orthogonality of the operator $J$ is equivalent to the condition $[Jx, y]=[x, J^{-1}y]$ for any elements $x, y \in L$.
\end{proposition}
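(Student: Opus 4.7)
The plan is to handle the three assertions as straightforward substitutions, exploiting the fact that non-degeneracy of $J$ makes the map $y \mapsto J^{-1}y$ a bijection, so that universally quantified identities can be transported back and forth.

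First, for the Lie-orthogonality of $J^{-1}$, I would take arbitrary $x,y \in L$ and set $u = J^{-1}x$, $v = J^{-1}y$, so that $x = Ju$ and $y = Jv$. Applying the hypothesis $[Ju, Jv] = [u,v]$ directly yields $[x,y] = [J^{-1}x, J^{-1}y]$, which is exactly the Lie-orthogonality of $J^{-1}$.

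Second, for the equivalent formulation, I would argue both implications by the same substitution trick. Assuming $[Jx, Jy] = [x,y]$ for all $x,y$, I replace $y$ by $J^{-1}y$ (legal because $J$ is invertible, and $y$ ranges freely): this gives $[Jx, y] = [x, J^{-1}y]$. Conversely, assuming $[Jx, y] = [x, J^{-1}y]$ for all $x,y$, I substitute $y \mapsto Jy$ to recover $[Jx, Jy] = [x, y]$.

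There is no real obstacle here; the only point to be careful about is to note explicitly that the non-degeneracy of $J$ is what makes the substitutions $y \mapsto J^{-1}y$ and $y \mapsto Jy$ into bijections of $L$, so that quantifying over all $y \in L$ before and after substitution gives equivalent statements. I would mention this once at the start rather than at each step.
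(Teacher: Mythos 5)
Your proof is correct and is exactly the routine substitution argument the paper has in mind; the paper in fact states this proposition without any proof, treating it as immediate. Your explicit remark that non-degeneracy is what makes the substitutions $y\mapsto J^{-1}y$ and $y\mapsto Jy$ bijections of $L$ is the only point of substance, and you handle it properly.
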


\begin{proposition}\label{PropositionOnLieOrthogonalityOfComposition}
The composition of Lie-orthogonal operators on a Lie algebra $L$ is a Lie-orthogonal operator on this algebra.
Therefore, the set of Lie-orthogonal operators on~$L$ is a monoid with involution with respect to the composition of operators.
The non-degenerate Lie-orthogonal operators on~$L$ form a group with respect to the same operation.
\end{proposition}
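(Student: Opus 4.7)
The plan is to split the statement into three parts and handle each directly; the whole proposition should follow by routine calculation once the earlier propositions of this subsection are in hand, so I do not anticipate a real obstacle.

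First I would verify that the composition $J_1J_2$ of two Lie-orthogonal operators $J_1,J_2$ on $L$ is itself Lie-orthogonal. The computation is a two-line chain: for arbitrary $x,y\in L$, apply the Lie-orthogonality of $J_1$ to the pair $(J_2x,J_2y)\in L\times L$, then the Lie-orthogonality of $J_2$ to $(x,y)$, giving
\[
 [J_1J_2x,\,J_1J_2y]=[J_2x,J_2y]=[x,y].
\]
So composition preserves Lie-orthogonality.

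Next I would promote this to the monoid-with-involution statement. Associativity is inherited from $\mathrm{End}(L)$, and the earlier example shows that $\Id_L$ is Lie-orthogonal, so it serves as the neutral element; together with closure just proved, this yields a monoid structure on the set of Lie-orthogonal operators. For the involution I would invoke the earlier proposition stating that $-J$ is Lie-orthogonal whenever $J$ is: the assignment $J\mapsto -J$ maps the set into itself and squares to the identity, so it is an involution on the monoid.

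Finally, for the group claim I would combine Part~1 with the proposition on non-degenerate Lie-orthogonal operators. The subset of non-degenerate Lie-orthogonal operators is closed under composition (composition of non-degenerate operators is non-degenerate, and composition of Lie-orthogonal operators is Lie-orthogonal), contains $\Id_L$, and is closed under taking inverses by the cited proposition (which guarantees that $J^{-1}$ is Lie-orthogonal whenever $J$ is non-degenerate and Lie-orthogonal). Hence the four group axioms are satisfied. No step here requires a genuinely new idea; the entire argument is bookkeeping on top of the preceding propositions.
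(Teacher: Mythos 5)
The paper states this proposition without any proof, and your argument is exactly the routine verification the author intends: the two-line computation $[J_1J_2x,J_1J_2y]=[J_2x,J_2y]=[x,y]$ for closure, associativity and $\Id_L$ inherited from $\mathrm{End}(L)$, the map $J\mapsto -J$ as the involution, and the earlier proposition on $J^{-1}$ for the group of non-degenerate operators. The only point worth flagging is that $J\mapsto -J$ is an involutive self-map preserving Lie-orthogonality but not an anti-homomorphism of the monoid (in general $(-J_1)(-J_2)=J_1J_2\neq -(J_1J_2)$), so ``monoid with involution'' must be read loosely as a monoid equipped with a distinguished involutive operation rather than as a $*$-monoid in the strict algebraic sense; this is an imprecision in the paper's own statement rather than a gap in your proof.
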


Using arguments from the proof of Theorem~$3$ from~\cite{Bilun&Maksimenko&Petravchuk2011}, we derive the following assertion:

\begin{proposition}
Let $J$ be a Lie-orthogonal operator on a Lie algebra~$L$ and let $S$ be an automorphism of $L$.
Then the operator $S^{-1}JS$ is also Lie-orthogonal on~$L$.
\end{proposition}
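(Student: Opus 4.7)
The plan is to verify the Lie-orthogonality identity for the conjugated operator $S^{-1}JS$ by a direct bracket manipulation, using only that $S$ (and therefore $S^{-1}$) is a Lie algebra homomorphism and that $J$ satisfies the Lie-orthogonality relation. Fix arbitrary elements $x,y\in L$. The target is to show
\[
[S^{-1}JSx,\,S^{-1}JSy]=[x,y].
\]

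First, I would use that $S^{-1}$ is an automorphism of $L$ to pull $S^{-1}$ out of the bracket on the left, obtaining $[S^{-1}JSx,S^{-1}JSy]=S^{-1}[JSx,JSy]$. Second, I would invoke the Lie-orthogonality of $J$ applied to the pair $(Sx,Sy)\in L\times L$ to replace $[JSx,JSy]$ by $[Sx,Sy]$. Third, since $S$ is itself a homomorphism, $[Sx,Sy]=S[x,y]$, and composing with $S^{-1}$ yields $[x,y]$. Chaining these three rewrites produces the desired identity.

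There is essentially no obstacle: the only thing to be careful about is the order in which $S$ and $S^{-1}$ are pushed across the brackets, and the fact that one needs $S$ to respect the Lie structure on \emph{both} sides (which is exactly the content of being an automorphism, not merely a linear isomorphism). A more conceptual phrasing, which I might include as a remark, is that $J\mapsto S^{-1}JS$ is the conjugation action of $\mathrm{Aut}(L)$ on $\mathrm{End}(L)$, and the set of Lie-orthogonal operators is visibly stable under this action because the Lie-orthogonality condition is formulated purely in terms of the bracket that $S$ preserves.
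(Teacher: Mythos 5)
Your proposal is correct and follows exactly the same chain of rewrites as the paper's proof: pull $S^{-1}$ out of the bracket, apply Lie-orthogonality of $J$ to the pair $(Sx,Sy)$, and then use $S^{-1}[Sx,Sy]=[x,y]$. No issues.
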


\begin{proof}
For any $x$ and $y$ from the algebra~$L$ the operator $\tilde J=S^{-1}JS$ satisfies the condition of Lie orthogonality: 
$[\tilde Jx,\tilde Jy]=[S^{-1}JSx, S^{-1}JSy]=S^{-1}[JSx, JSy]=S^{-1}[Sx, Sy]=[x,y]$.
\end{proof}

\begin{corollary}
The set of Lie-orthogonal operators on a Lie algebra~$L$ is a $G$-set with $G=\mathrm{Aut}(L)$.
\end{corollary}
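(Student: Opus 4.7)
The plan is to exhibit an explicit action of $G=\mathrm{Aut}(L)$ on the set $\mathcal{J}$ of Lie-orthogonal operators on~$L$ and then check the two defining axioms of a group action (closure together with compatibility with composition, and triviality of the identity). There is essentially no hard step here: the preceding proposition does all the substantive work, and what remains is bookkeeping about the conjugation action.

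First I would set $S\cdot J := S^{-1}JS$ for $S\in\mathrm{Aut}(L)$ and $J\in\mathcal{J}$. Closure, i.e.\ the fact that $S\cdot J$ again belongs to~$\mathcal{J}$, is exactly the content of the preceding proposition, so nothing is to be verified there. For the identity axiom, $\Id_L\cdot J = \Id_L^{-1}J\,\Id_L = J$ is immediate. For compatibility with composition, one computes
\[
 (ST)\cdot J=(ST)^{-1}J(ST)=T^{-1}S^{-1}JST=T^{-1}(S\cdot J)T=T\cdot(S\cdot J),
\]
which exhibits the assignment as a right action of $\mathrm{Aut}(L)$ on~$\mathcal{J}$. (Equivalently, by reindexing $S\mapsto S^{-1}$, one obtains the left action $S\cdot J:=SJS^{-1}$; since $\mathrm{Aut}(L)$ is closed under inversion, the previous proposition still guarantees closure.)

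The only point worth a brief comment is the direction (left versus right) of the action; since $\mathrm{Aut}(L)$ is a group, either formulation is acceptable and nothing in the paper forces a choice. No step presents any real obstacle, which is why the statement is recorded as a corollary rather than as a separate theorem.
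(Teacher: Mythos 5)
Your proposal is correct and matches the paper's intent exactly: the paper offers no separate proof of this corollary, treating it as an immediate consequence of the preceding proposition (closure of $\mathcal{J}$ under conjugation by automorphisms), and your verification of the identity and compatibility axioms is the routine bookkeeping that the paper leaves implicit. Your remark about the left-versus-right convention is accurate and harmless.
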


\subsection{Basis approach}

Consider an $n$-dimensional Lie algebra $L$ and a Lie-orthogonal operator~$J$ on~$L$.
We fix a basis $\{e_1, \dots, e_n\}$ of~$L$.
The commutation relations of~$L$ in the fixed basis take the form $[e_i, e_j]=c_{ij}^ke_k$, 
where $c_{ij}^k$ are components of the structure constant tensor of~$L$, and the matrix of the operator~$J$ is~$(J_{ij})$.
Here and in what follows the indices $i$, $j$ and $k$ run from~$1$ to~$n$, and we assume summation over repeated indices.

We write down the definitions of Lie orthogonality for each pair of basis elements and expand all involved elements of the algebra with respect to the basis:
\begin{gather*}
[Je_i, Je_j]=[J_{i'i}e_{i'}, J_{j'j}e_{j'}]=J_{i'i}J_{j'j}[e_{i'}, e_{j'}]=J_{i'i}J_{j'j}c_{i'j'}^ke_k=[e_i, e_j]=c_{ij}^ke_k. 
\end{gather*}
Hence $J_{i'i}J_{j'j}c_{i'j'}^k=c_{ij}^k$.
In the matrix notation the last condition is represented in the form
\[
J^{\mathrm T}C^kJ=C^k,
\]
where for each fixed~$k$ the matrix $C^k=(c_{ij}^k)$ is skew-symmetric. 
It appears to be the condition of the invariance of the bilinear skew-symmetric form associated with the matrix~$C_k$ with respect to the operator~$J$ for each~$k$.
This implies the system of at most $n^2(n-1)/2$ quadratic inhomogeneous equations for the coefficients of the matrix of the operator~$J$.
The solution of the system completely describes the set of Lie-orthogonal operators on the Lie algebra~$L$.
(See Section~\ref{SectionOnDirectCalcOfLieOrtOps} for examples of calculations involving the basis approach.)

\section{Invariant ideals of Lie-orthogonal operators}\label{SectionOnInvIdealsOfLieOrtOps}

In the theory of Lie-orthogonal operators an important role is played by invariant subspaces of these operators, which are also ideals of the corresponding Lie algebras.

\subsection{Center}

A special place among ideals that are invariant with respect to all Lie-orthogonal operators over a given Lie algebra~$L$ belongs to the center~$Z$ of~$L$.
The properties of Lie-orthogonal operators on~$L$ essentially depend on whether or not the center~$Z$ is zero.

\begin{lemma}[\cite{Petravchuk&Bilun2003}]\label{LemmaOnZeroFittingComponentInCenter}\looseness=-1
Let $L$ be a finite-dimensional Lie algebra and $J$~be a Lie-orthogonal operator on~$L$.
Then $L_0\subseteq Z$, where $Z$ is the center of~$L$ and $L_0$ is the generalized eigenspace of~$J$ corresponding to zero 
(i.e., the zero component of the Fitting decomposition of~$L$ with respect to~$J$).
\end{lemma}

\begin{proof} 
We prove this lemma in another way than that in~\cite{Petravchuk&Bilun2003}, without using the Jordan normal form of~$J$.
Let $k$ be the nilpotency degree of the restriction of $J$ on~$L_0$.
Given arbitrary elements $x\in L_0$ and $y\in L$, we have $J^kx=0$.
Since $J^k$ is a Lie-orthogonal operator on $L$, as $J$ is, we obtain 
$[x, y]=[J^kx, J^ky]=[0, J^ky]=0$, and hence $x$ belongs to the center~$Z$.
\end{proof}

\begin{corollary}
Any Lie-orthogonal operator on a centerless Lie algebra is invertible. 
The Lie-orthogonal operators on such an algebra form a group.
\end{corollary}

\begin{lemma}\label{LemmaOnInvCenter}
Let $L$ be a finite-dimensional Lie algebra and $J$~be a Lie-orthogonal operator on~$L$. 
Then $J(Z)\subseteq Z$, where $Z$ is the center of~$L$.
\end{lemma}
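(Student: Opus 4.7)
The plan is to begin with the elementary observation that $\ker J\subseteq Z$, upgrade it to $\ker J^k\subseteq Z$ for every $k\geq 1$ by induction, and then use the Fitting decomposition of $L$ with respect to $J$ to reduce to the invertible case.

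For the base step, if $Jx=0$ then $[x,y]=[Jx,Jy]=0$ for every $y\in L$, so $x\in Z$. The inductive step uses the same observation with one extra application of Lie orthogonality: from $x\in\ker J^{k+1}$ one gets $Jx\in\ker J^{k}\subseteq Z$, whence $[Jx,Jy]=0$, and combining this with $[Jx,Jy]=[x,y]$ forces $x\in Z$. This places the Fitting null component $L_0$ (the generalized kernel of $J$, equal to $\ker J^{\dim L}$) entirely inside $Z$. The complementary component $L_1=\mathrm{Im}\,J^{\dim L}$ is $J$-invariant, $J|_{L_1}$ is invertible, and $L=L_0\oplus L_1$.

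For an arbitrary $z\in Z$ I would split $z=z_0+z_1$ with $z_i\in L_i$. Since $L_0\subseteq Z$, the component $z_0$ is central, hence so is $z_1=z-z_0$, and it suffices to treat the two summands separately. The part $Jz_0$ stays in $L_0\subseteq Z$ because $L_0$ is $J$-invariant. For $Jz_1$ I would verify that $[Jz_1,x]=0$ for arbitrary $x\in L$: writing $x=x_0+x_1$ with $x_i\in L_i$, the term $[Jz_1,x_0]$ vanishes because $x_0\in Z$, while invertibility of $J|_{L_1}$ provides a $y_1\in L_1$ with $Jy_1=x_1$, and Lie orthogonality then gives $[Jz_1,x_1]=[Jz_1,Jy_1]=[z_1,y_1]=0$ since $z_1\in Z$.

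The only subtlety is the degenerate case: for a non-degenerate $J$ the assertion collapses to a single line, since $J(L)=L$ together with $[Jz,Jy]=[z,y]=0$ puts $Jz$ in the centralizer of all of $L$. The real work is therefore the chain $\ker J^k\subseteq Z$, which confines the nilpotent part of $J$ to the center and lets the invertible part $J|_{L_1}$ finish the argument through the surjectivity trick above.
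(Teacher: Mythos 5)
Your proof is correct and follows the same basic strategy as the paper's: split $L$ by the Fitting decomposition with respect to $J$ and exploit the surjectivity of $J$ on the invertible component to show $[Jz,\cdot]$ vanishes there. The one substantive difference is that you first prove $\ker J^k\subseteq Z$ by induction, i.e.\ that the whole null component $L_0$ lies in the center, and you then use this to dispose of the commutator $[Jz_1,x_0]$ with $x_0\in L_0$. The paper's proof needs exactly the same fact at the step $0=[Jx,\tilde y_1]=[Jx,\tilde y_1]+[Jx,\tilde y_0]$, which silently asserts $[Jx,\tilde y_0]=0$ for $\tilde y_0\in L_0$; the inclusion $L_0\subseteq Z$ that justifies this is only established later in the paper, as a separate lemma quoted from Petravchuk and Bilun. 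So your version is self-contained where the paper's, read in order, is not --- at the modest cost of front-loading (a variant of) a lemma the paper proves anyway via the observation that $J^k$ is itself Lie-orthogonal.
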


\begin{proof}
Suppose that $x\in Z$. Then $[x, y]=0$ for any $y\in L$.
It is necessary to prove that any $y\in L$ satisfies the condition $[Jx, y]=0$.
We write down the Fitting decomposition (see e.g. \cite{Jackobson1964}) of the space~$L$ with respect to the operator~$J$:
$L=L_0\dotplus \hat L$, where $L_0$~and~$\hat L$ are invariant subspaces of~$J$, 
and the restriction of~$J$ on $L_0$ (resp.\ $\hat L$) is a nilpotent (resp.\ invertible) operator. 
Here and in what follows the symbol ``$\dotplus$'' denotes the direct sum of vector spaces. 
Hence for each $y$ from $L$ we have $y=y_0+y_1$, where $y_0\in L_0$ and $y_1\in \hat L$.
Properties of $J$, the definition of $y_1$ and the bilinearity of the Lie bracket imply that
$0=[x, y]=[Jx, Jy]=[Jx, Jy_1]$. 
As the operator~$J$ is invertible on $\hat L$,  
the element $\tilde y_1=Jy_1$ runs through $\hat L$ when the element $y_1$ runs through $\hat L$.
Given an arbitrary element $\tilde y_0\in L_0$, the element $\tilde y=\tilde y_1+\tilde y_0$ runs through the entire space~$L$ and the following equality is true:
$0=[Jx, Jy_1]=[Jx, \tilde y_1]=[Jx, \tilde y_1]+[Jx, \tilde y_0]=[Jx, \tilde y]$.
\end{proof}

\begin{lemma}\label{LemmaOnIndeterminacyOfLieOrtOpsOnCenter1}
Suppose that $L$ is a finite-dimensional Lie algebra, its center~$Z$ is nonzero and
$\hat L$ is an arbitrary subspace of the space~$L$ such that $L=Z\dotplus\hat L$. 
If $J$~is a Lie-orthogonal operator on~$L$
and an operator $\tilde J$ on $L$ satisfies the conditions $\tilde J|_{\hat L}=J|_{\hat L}$ and $\tilde J(Z)\subseteq Z$
then $\tilde J$ is a Lie-orthogonal operator on~$L$. 
\end{lemma}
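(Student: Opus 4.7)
The plan is to exploit the fact that the bracket annihilates anything in the center, so the Lie-orthogonality condition really only sees what $\tilde J$ does on $\tilde L$, where $\tilde J$ coincides with the known Lie-orthogonal operator $J$.

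Concretely, I would pick arbitrary $x,y\in L$ and decompose them according to $L=Z\dotplus\tilde L$ as $x=x_0+\tilde x$ and $y=y_0+\tilde y$ with $x_0,y_0\in Z$ and $\tilde x,\tilde y\in\tilde L$. Since $x_0,y_0\in Z$, the bilinearity and the centrality immediately give
\[
[x,y]=[\tilde x,\tilde y].
\]

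Next I would compute $\tilde J x$ and $\tilde J y$ using the two assumptions on $\tilde J$:
\[
\tilde Jx=\tilde Jx_0+\tilde J\tilde x=\tilde Jx_0+J\tilde x,\qquad
\tilde Jy=\tilde Jy_0+J\tilde y,
\]
where $\tilde Jx_0,\tilde Jy_0\in Z$ by the hypothesis $\tilde J(Z)\subseteq Z$. Bracketing, the two center summands drop out, so
\[
[\tilde Jx,\tilde Jy]=[J\tilde x,J\tilde y].
\]
Finally, applying the Lie-orthogonality of $J$ to the elements $\tilde x,\tilde y\in\tilde L\subseteq L$ yields $[J\tilde x,J\tilde y]=[\tilde x,\tilde y]$, which together with the first equality gives $[\tilde Jx,\tilde Jy]=[x,y]$, as required.

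There is no real obstacle here: the statement is essentially a bookkeeping lemma saying that the action of a Lie-orthogonal operator on the center is free as long as $Z$ is mapped into itself. The only subtlety is to note that $\tilde L$ need not be a subalgebra, so when expanding $[\tilde J x,\tilde J y]$ one must genuinely use that the center terms vanish inside the bracket rather than try to invoke orthogonality of $J$ restricted to $\tilde L$ directly; this is handled by the centrality of $\tilde Jx_0$ and $\tilde Jy_0$.
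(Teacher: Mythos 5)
Your argument is correct and is essentially the same as the paper's: both decompose $x$ and $y$ along $L=Z\dotplus\tilde L$, drop the central summands from the bracket (using $\tilde J(Z)\subseteq Z$ on the image side), and apply the Lie-orthogonality of $J$ to the $\tilde L$-components. No issues.
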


In other words, a Lie-orthogonal operator can be arbitrarily redefined on the center of the corresponding Lie algebra.

\begin{proof}
We represent arbitrary elements $x$ and $y$ from $L$ in the form $x=x_0+x_1$ and $y=y_0+y_1$, where $x_0, y_0 \in Z$ and $x_1, y_1\in \hat L$. Then
\begin{gather*}
[\tilde Jx, \tilde Jy]=[\tilde Jx_0+\tilde Jx_1, \tilde Jy_0+\tilde Jy_1]=[\tilde Jx_1, \tilde Jy_1]=[Jx_1, Jy_1]=\\
=[x_1, y_1]=[x_0+x_1, y_0+y_1]=[x, y].
\end{gather*}
This means that $\tilde J$ is a Lie orthogonal operator.
\end{proof}

The following assertion generalizes Lemma~\ref{LemmaOnIndeterminacyOfLieOrtOpsOnCenter1}. 

\begin{lemma}\label{LemmaOnIndeterminacyOfLieOrtOpsOnCenter2}
Suppose that $L$ is a finite-dimensional Lie algebra, $Z$ is its center,
$J$~is a Lie-orthogonal operator on~$L$ and $J_0$~is an operator on~$L$ with the image contained in~$Z$, $J_0L\subseteq Z$.
Then $J+J_0$~is a Lie-orthogonal operator on~$L$. 
\end{lemma}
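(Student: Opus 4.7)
The plan is to proceed by a direct bilinear expansion, exploiting centrality of the image of $J_0$ to kill cross terms. Fix arbitrary $x, y \in L$ and compute
\begin{gather*}
[(J+J_0)x,\,(J+J_0)y] = [Jx,Jy] + [Jx, J_0y] + [J_0x, Jy] + [J_0x, J_0y].
\end{gather*}
Since $J_0x, J_0y \in Z$ by hypothesis, the last three brackets all vanish. The remaining term equals $[x,y]$ because $J$ is Lie-orthogonal, giving the desired identity.

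To see this really generalizes Lemma~\ref{LemmaOnIndeterminacyOfLieOrtOpsOnCenter1}, one should remark that in the setting of that lemma the difference $J_0 := \tilde J - J$ vanishes on $\tilde L$ by construction and maps $Z$ into $Z$ (using $\tilde J(Z)\subseteq Z$ and $J(Z)\subseteq Z$ from Lemma~\ref{LemmaOnInvCenter}); hence $J_0L \subseteq Z$, and the current lemma applied to $J$ and $J_0$ recovers the previous conclusion.

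There is essentially no obstacle: the proof uses only bilinearity of the bracket and the defining property of the center. The only point worth flagging is that one must \emph{not} need to assume anything about $J_0$ beyond the image condition — in particular, $J_0$ is neither required to be Lie-orthogonal nor to respect any subspace decomposition — since the three cross terms vanish for the same reason regardless.
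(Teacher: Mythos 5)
Your proof is correct and is essentially identical to the paper's own argument: both expand $[(J+J_0)x,(J+J_0)y]$ bilinearly and observe that the three cross terms vanish because $J_0x,J_0y\in Z$, leaving $[Jx,Jy]=[x,y]$. The additional remark on how this recovers Lemma~\ref{LemmaOnIndeterminacyOfLieOrtOpsOnCenter1} is a correct and harmless bonus not present in the paper.
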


\begin{proof}
We take arbitrary elements $x$ and $y$ from $L$. As $J_0x$ and $J_0y$ belong to the center by lemma's hypotheses, we have
$[(J+J_0)x, (J+J_0)y]=[Jx, Jy]+[J_0x, Jy]+[Jx, J_0y]+[J_0x, J_0y]=[Jx, Jy]=[x, y]$. This completes the proof.
\end{proof}

In Lemma~\ref{LemmaOnIndeterminacyOfLieOrtOpsOnCenter1} the operator $J_0$ is zero on~$\hat L$.
Lemma \ref{LemmaOnIndeterminacyOfLieOrtOpsOnCenter2} allows us to introduce an equivalence relation on the set of Lie-orthogonal operators on a fixed algebra~$L$.

\begin{definition}
We call Lie-orthogonal operators $J$ and $\tilde J$ on a Lie algebra \emph{equivalent} if the image of their difference is contained in the center of this algebra.
\end{definition}

The equivalence relation is consistent with the action of the automorphism group of the algebra on its set of Lie-orthogonal operators.
If the algebra is centerless, only coinciding operators are equivalent. 
Lemmas~\ref{LemmaOnZeroFittingComponentInCenter} and~\ref{LemmaOnIndeterminacyOfLieOrtOpsOnCenter1} imply the following assertion:

\begin{corollary}
Any Lie-orthogonal operator on a finite-dimensional Lie algebra is equivalent to an invertible Lie-orthogonal operator on the same algebra.
\end{corollary}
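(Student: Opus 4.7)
The plan is to exploit the Fitting decomposition $L=L_0\dotplus\hat L$ of $L$ with respect to $J$, in which $J$ acts nilpotently on $L_0$ and invertibly on $\hat L$, together with the immediately preceding lemma asserting $L_0\subseteq Z$. Because the entire ``defect'' of $J$ from being invertible lives inside $L_0$, and $L_0$ sits in the center, one can hope to repair invertibility by modifying $J$ on $L_0$ without leaving the equivalence class of $J$.

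Concretely, I would define $\tilde J$ by $\tilde J|_{\hat L}:=J|_{\hat L}$ and $\tilde J|_{L_0}:=\Id_{L_0}$, and write $\tilde J=J+J_0$ where $J_0$ vanishes on $\hat L$ and equals $\Id_{L_0}-J|_{L_0}$ on $L_0$. Then $J_0(L)\subseteq L_0\subseteq Z$, so Lemma~\ref{LemmaOnIndeterminacyOfLieOrtOpsOnCenter2} applies and yields that $\tilde J$ is Lie-orthogonal, while the inclusion $J_0(L)\subseteq Z$ is exactly the statement that $J$ and $\tilde J$ are equivalent in the sense of the preceding definition.

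Invertibility of $\tilde J$ is then immediate: the decomposition $L=L_0\dotplus\hat L$ is $\tilde J$-invariant by construction, with $\tilde J|_{L_0}=\Id_{L_0}$ obviously invertible and $\tilde J|_{\hat L}=J|_{\hat L}$ invertible by the Fitting decomposition. I do not expect any genuine obstacle here; the corollary is really an assembly of three ingredients already in place, namely the Fitting decomposition of $J$, the inclusion $L_0\subseteq Z$, and the freedom granted by Lemma~\ref{LemmaOnIndeterminacyOfLieOrtOpsOnCenter2} to modify a Lie-orthogonal operator by any endomorphism whose image lies in the center. The only point that warrants attention is keeping track of which summand is which, so as to confirm both that $\tilde J-J$ indeed has central image and that $\tilde J$ is invertible on both summands simultaneously.
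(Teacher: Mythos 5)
Your proposal is correct and is essentially the argument the paper intends (the corollary is stated without an explicit proof, immediately after the lemma $L_0\subseteq Z$ and the central-modification Lemma~\ref{LemmaOnIndeterminacyOfLieOrtOpsOnCenter2}): repair $J$ on the Fitting component $L_0$, which lies in the center, so the modification stays within the equivalence class and yields an operator invertible on both summands. The only cosmetic remark is that when $Z=\{0\}$ one has $L_0=\{0\}$ and $\tilde J=J$, so the nonzero-center hypothesis of Lemma~\ref{LemmaOnIndeterminacyOfLieOrtOpsOnCenter2} causes no trouble.
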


Given a decomposition of an $n$-dimensional algebra~$L$ into the direct sum of the center~$Z$ and its complement~$\hat L$ as vector spaces ($L=Z\dotplus \hat L$), 
the ``essential'' part of any Lie-orthogonal operator~$J$ on~$L$ is the operator $PJP$, 
where $P$~is the operator of projection onto~$\hat L$ in the above decomposition of~$L$. 
The operator $PJP$ is associated with the factorized operator $J/Z$ on the factor-algebra $L/Z$ (cf.\ Section~\ref{SectionOnSpecIdeals}).
After fixing a basis in such a way that the first~$k$ elements of it form a basis of~$Z$, where $k=\dim Z$, and the others form a basis of~$\hat L$,  
we obtain that the matrix of any Lie-orthogonal operator~$J$ on~$L$ in the chosen basis can be represented in the form
\[
\left(
\begin{array}{cc}
B_0&B_1\\
0&\hat J
\end{array}
\right),
\]
where $B_0$ and $B_1$~are arbitrary $k\times k$ and $k\times (n-k)$  matrices, respectively, $0$~is the zero $(n-k)\times k$ matrix,  
and $\hat J$~is the matrix of the restriction of $PJP$ on~$\hat L$.
The matrix~$\hat J$ can also be interpreted as the matrix of the factorized operator~$J/Z$ on the factor-algebra~$L/Z$.

\begin{proposition}
The equivalence classes of Lie-orthogonal operators on a finite-dimensional Lie algebra form a group.
\end{proposition}

\begin{proof}
Let $L$ be a finite-dimensional Lie algebra and let $Z$ denote the center of~$L$.
In view of Propositions~\ref{PropositionOnLieOrthogonalityOfInverseOperator} and~\ref{PropositionOnLieOrthogonalityOfComposition}
it suffices to check the correctness of the group operations defined in terms of representatives of equivalence classes. 

Suppose that $J_1$, $J_2$, $\tilde J_1$ and~$\tilde J_2$ are Lie-orthogonal operators on~$L$ and  
the operators~$\tilde J_1$ and~$\tilde J_2$ are equivalent to the operators~$J_1$ and~$J_2$, respectively. 
Then the operator~$\tilde J_1\tilde J_2$ is equivalent to the operator~$J_1J_2$. 
Indeed, $\tilde J_1\tilde J_2-J_1J_2=(\tilde J_1-J_1)\tilde J_2+J_1(\tilde J_2-J_2)$. 
As $(\tilde J_1-J_1)L\subseteq Z$, $(\tilde J_1-J_1)L\subseteq Z$ and $J_1Z\subseteq Z$, 
we have that $(\tilde J_1\tilde J_2-J_1J_2)L\subseteq Z$. 

Suppose that $J$ and~$\tilde J$ are equivalent invertible Lie-orthogonal operators on~$L$.
The equality $\tilde J^{-1}-J^{-1}=-J^{-1}(\tilde J-J)\tilde J^{-1}$ and the conditions $(\tilde J-J)L\subseteq Z$ and $J^{-1}Z\subseteq Z$
imply that $(\tilde J^{-1}-J^{-1})L\subseteq Z$, i.e., the Lie-orthogonal operators~$J^{-1}$ and~$\tilde J^{-1}$ are equivalent.
\end{proof}

\subsection{Radical and other special ideals}\label{SectionOnSpecIdeals}

\begin{lemma}\label{LemmaOnInvarienceOfRad}
Let $L$~be a finite-dimensional Lie algebra and $J$~be a Lie-orthogonal operator on~$L$. Then
$J(R)\subseteq R$, where $R=R(L)$~is the radical of the algebra~$L$, i.e., its maximal solvable ideal.
\end{lemma}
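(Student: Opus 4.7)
The plan is to first handle the case when $J$ is invertible, and then to reduce the general case to this via the preceding corollary, which guarantees that $J$ is equivalent to an invertible Lie-orthogonal operator.

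In the invertible case, I would introduce the subspace $I:=R+J(R)$ and argue that $I$ is a solvable ideal of $L$; the maximality of the radical then forces $I=R$, and hence $J(R)\subseteq R$. For the ideal property the key tool is the identity $[Jx,y]=[x,J^{-1}y]$ available in the invertible case: for $x\in R$ and $y\in L$ the right-hand side lies in $[R,L]\subseteq R$, so $[J(R),L]\subseteq R\subseteq I$, and combined with the obvious inclusion $[R,L]\subseteq R$ this yields $[I,L]\subseteq I$. For solvability I would check that all four blocks making up $[I,I]$ land in $R$: the block $[R,R]$ trivially, the block $[J(R),J(R)]$ by the defining Lie-orthogonality relation $[Jx,Jy]=[x,y]$ applied to $x,y\in R$, and the mixed blocks $[R,J(R)]$ and $[J(R),R]$ by the same adjoint-type identity as above. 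Thus $[I,I]\subseteq R$, so $I^{(k+1)}\subseteq R^{(k)}$ for every $k$, and hence $I$ inherits solvability from $R$.

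For the general case, I would invoke the preceding corollary to produce an invertible Lie-orthogonal operator $\tilde J$ with $\mathrm{Im}(J-\tilde J)\subseteq Z$. Since the center is an abelian (hence solvable) ideal, $Z\subseteq R$; for any $x\in R$ the decomposition $Jx=\tilde Jx+(J-\tilde J)x$ then exhibits $Jx$ as a sum of an element of $R$ (by the invertible case applied to $\tilde J$) and an element of $Z\subseteq R$, so $Jx\in R$.

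The main obstacle I anticipate is exactly the non-invertible case, where the adjoint-type identity $[Jx,y]=[x,J^{-1}y]$ is unavailable and $J(R)$ cannot be promoted to an $L$-stable subspace by direct manipulation. This is precisely what the equivalence reduction from the preceding corollary is designed to absorb: the problematic nullspace behaviour of $J$ is confined to the center, which already sits inside the radical, so it contributes nothing outside $R$.
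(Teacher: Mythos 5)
Your proof is correct, but it follows a genuinely different route from the paper's. The paper characterizes the radical as the orthogonal complement of $L'=[L,L]$ with respect to the Killing form $K$ and then computes, for $x\in R$ and $y,z\in L$,
\[
K(Jx,[y,z])=K(Jx,[Jy,Jz])=K([Jx,Jy],Jz)=K([x,y],Jz)=K(x,[y,Jz])=0,
\]
which handles arbitrary (possibly degenerate) $J$ in one line with no case split. You instead use the characterization of $R$ as the maximal solvable ideal: in the invertible case you enlarge $R$ to $I=R+J(R)$, verify via the adjoint identity $[Jx,y]=[x,J^{-1}y]$ and Lie orthogonality that $I$ is an ideal with $[I,I]\subseteq R$ (hence solvable), and conclude $I=R$ by maximality; the degenerate case is then absorbed by the corollary that every Lie-orthogonal operator is equivalent to an invertible one, together with $Z\subseteq R$. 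All steps check out, and there is no circularity: the equivalence corollary and the adjoint identity are established before this lemma and do not depend on it. The trade-off is that the paper's argument is shorter but leans on Cartan's criterion ($R=(L')^{\perp_K}$), which is specific to characteristic $0$, whereas your argument avoids the Killing form entirely and survives in greater generality; on the other hand, yours needs the Fitting-decomposition machinery behind the equivalence corollary and is structurally closer to the paper's subsequent Lemma~\ref{LemmaOnGenInvIdeals} (reduce to the invertible case, then exploit $[Jx,y]=[x,J^{-1}y]$), of which the present lemma is in fact a special case since $L/R$ is centerless.
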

\begin{proof}
By $K(x, y)$ we denote the Killing form of the algebra~$L$. 
The radical $R$ is the orthogonal complement to the derived algebra $L'$ of the algebra $L$ with respect to the Killing form $K$~\cite{Bourbaki1975},~i.e.,
\[
R=\{x\in L\mid \forall y,z\in L\colon K(x, [y,z])=0\}.
\]
Given an arbitrary $x$ from $R$ and arbitrary $y$ and $z$ from $L$, 
the definition of Lie-orthogonal operator and the associativity of the Killing form with respect to the Lie bracket imply 
\begin{gather*}
K(Jx, [y,z])=K(Jx, [Jy, Jz])=K([Jx, Jy], Jz)=K([x, y], Jz)=K(x, [y, Jz])=0.
\end{gather*}
This means that $Jx\in R$.
\end{proof}

It is clear from the proof of Lemma 3 from \cite{Bilun&Maksimenko&Petravchuk2011} that this lemma is true not only for invertible Lie-orthogonal operators.
Therefore the lemma can be reformulated in the following way:

\begin{lemma}\label{LemmaOnGenInvIdeals}
Suppose that $I$ is an ideal of a Lie algebra $L$ such that the factor-algebra $L/I$ is centerless.
Then the ideal $I$ is invariant with respect to the action of each Lie-orthogonal operator on the algebra~$L$.
\end{lemma}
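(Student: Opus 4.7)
The natural strategy is to reformulate ideal-invariance under $J$ as a centrality statement in the quotient. Fix $x\in I$; the hypothesis $Z(L/I)=0$ means that an element $z\in L$ lies in $I$ as soon as $[z,y]\in I$ for every $y\in L$. So the task reduces to showing $[Jx,y]\in I$ for every $y\in L$.

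To handle an arbitrary $y$, I would apply the Fitting decomposition $L=L_0\dotplus\hat L$ of $L$ with respect to $J$, already used in the proof of Lemma~\ref{LemmaOnInvCenter}, and write $y=y_0+y_1$ with $y_0\in L_0$ and $y_1\in\hat L$. On the invertible part, $J|_{\hat L}$ is a bijection of $\hat L$, so there is $y'\in\hat L$ with $Jy'=y_1$; Lie-orthogonality then gives $[Jx,y_1]=[Jx,Jy']=[x,y']$, which lies in $I$ because $x\in I$ and $I$ is an ideal. This is exactly the trick that works immediately when $J$ is invertible, which is why the remark preceding the lemma asserts that invertibility was inessential in the earlier source.

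The remaining piece $[Jx,y_0]$ is the real obstacle, because Lie-orthogonality provides no direct control over brackets against directions on which $J$ degenerates. Here I would invoke the previously stated lemma from~\cite{Petravchuk&Bilun2003} that the zero Fitting component satisfies $L_0\subseteq Z$; this makes $y_0$ central in $L$, whence $[Jx,y_0]=0\in I$. Adding the two contributions yields $[Jx,y]\in I$, and the opening reduction closes the argument. Conceptually, the lemma is exactly the extension of Lemma~\ref{LemmaOnInvCenter} from $Z$ to a general ideal with centerless quotient, and the inclusion $L_0\subseteq Z$ (together with the easy observation $Z\subseteq I$, which itself follows from $Z(L/I)=0$) is precisely what absorbs the non-invertibility of $J$.
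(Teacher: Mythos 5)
Your proof is correct, but it takes a mechanically different route from the paper's. The paper first observes that $Z\subseteq I$ (since central elements map to central elements of $L/I$), deduces that invariance of $I$ is a property of the equivalence class of $J$, and then replaces $J$ by an equivalent \emph{invertible} Lie-orthogonal operator, for which the identity $[Jx,y]=[x,J^{-1}y]\in I$ immediately places $Jx+I$ in the (trivial) center of $L/I$. You instead keep $J$ fixed and decompose the \emph{argument} $y$ via the Fitting decomposition $L=L_0\dotplus\hat L$: on $\hat L$ you exploit surjectivity of $J|_{\hat L}$ to write $[Jx,y_1]=[x,y']\in I$, and on $L_0$ you kill the bracket outright using $L_0\subseteq Z$. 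Both arguments rest on the same two pillars --- the characterization of $I$-membership via centrality in $L/I$, and the inclusion $L_0\subseteq Z$ (which in the paper enters indirectly, through the corollary that every Lie-orthogonal operator is equivalent to an invertible one). Your version is somewhat more self-contained, as it bypasses the equivalence-relation machinery entirely; the paper's version is shorter once that machinery is in place and reuses it throughout the rest of the text (e.g., in Lemma~\ref{LemmaOnOpOnSumIdealsEqualsSumOp}). One cosmetic remark: your parenthetical appeal to $Z\subseteq I$ is not actually needed in your argument, since $[Jx,y_0]=0$ lies in $I$ trivially.
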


\begin{proof}
In the course of the factorization with respect to the ideal~$I$, 
elements of the center of the algebra $L$ are mapped into the center of the factor-algebra $L/I$, which is zero by the lemma's hypothesis. 
This implies that the center of the algebra is contained in the ideal $I$.
Hence this ideal is invariant with respect to a Lie-orthogonal operator~$J$ if and only if it is invariant with respect to any Lie-orthogonal operator which is equivalent to~$J$.
This is why we can assume, without loss of generality, that the operator~$J$ is invertible.

Using the reformulated definition of Lie-orthogonal operator, we obtain that for any $x\in I$ and $y\in L$ the commutator
$[Jx, y]=[x, J^{-1}y]$ belongs to the ideal~$I$. Therefore, the equivalence class $Jx+I$ belongs to the center of the factor-algebra $L/I$. This implies that $Jx\in I$. 
\end{proof}

If an ideal $I$ of a Lie algebra~$L$ is invariant with respect to the action of a Lie-orthogonal operator~$J$ on~$L$ then the operator~$J$ can be factorized consistently with factorizing the algebra~$L$.
The factor-operator~$J/I$ on the factor-algebra $L/I$ is also Lie-orthogonal.
At the same time, factorized Lie-orthogonal operators do not exhaust all possible Lie-orthogonal operators on the factor-algebra.

Iteratively combining Lemma~\ref{LemmaOnInvCenter} with factorizing with respect to the corresponding centers, we obtain a generalization of Corollary~2 from~\cite{Bilun&Maksimenko&Petravchuk2011}.

\begin{proposition}
Each element of the ascending central series of a Lie algebra~$L$ is invariant with respect to any Lie-orthogonal operator on~$L$.
\end{proposition}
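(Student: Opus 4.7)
The plan is to proceed by induction on~$i$, where $Z_i$ denotes the $i$-th term of the ascending central series of~$L$, defined by $Z_0=\{0\}$, $Z_1=Z(L)$, and $Z_{i+1}/Z_i=Z(L/Z_i)$ for $i\geqslant 1$. The base case $i=0$ is trivial, while $i=1$ is exactly the content of Lemma~\ref{LemmaOnInvCenter}.

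For the inductive step, I would assume that $J(Z_i)\subseteq Z_i$ and aim to prove $J(Z_{i+1})\subseteq Z_{i+1}$. Since $Z_i$ is $J$-invariant by the inductive hypothesis, the discussion preceding the proposition (and the general principle recorded after Lemma~\ref{LemmaOnGenInvIdeals}) tells us that $J$ descends to a well-defined operator $J/Z_i$ on the factor-algebra $L/Z_i$, and this factor-operator is itself Lie-orthogonal on~$L/Z_i$. Applying Lemma~\ref{LemmaOnInvCenter} to $L/Z_i$ and the operator $J/Z_i$ gives $(J/Z_i)\bigl(Z(L/Z_i)\bigr)\subseteq Z(L/Z_i)$, i.e., $(J/Z_i)(Z_{i+1}/Z_i)\subseteq Z_{i+1}/Z_i$.

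Translating this back upstairs: for any $x\in Z_{i+1}$ one has $Jx+Z_i\in Z_{i+1}/Z_i$, and since $Z_i\subseteq Z_{i+1}$, it follows that $Jx\in Z_{i+1}$. This completes the induction.

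There is no real obstacle here; the only point that needs a brief word is the well-definedness of the factor-operator $J/Z_i$, which is precisely what the inductive hypothesis $J(Z_i)\subseteq Z_i$ provides, and the fact that factorizing a Lie-orthogonal operator by a $J$-invariant ideal preserves Lie-orthogonality, which is an immediate consequence of the definition. The argument is essentially an iteration of Lemma~\ref{LemmaOnInvCenter}, as the statement suggests.
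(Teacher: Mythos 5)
Your induction is exactly the argument the paper has in mind: it states the result by remarking that one should ``iteratively combine'' Lemma~\ref{LemmaOnInvCenter} with factorization by the successive centers, using the fact (recorded just before the proposition) that a Lie-orthogonal operator factors to a Lie-orthogonal operator on the quotient by any invariant ideal. Your write-up simply makes that iteration explicit, so the proposal is correct and follows the paper's route.
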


\begin{lemma}\label{LemmaOnOpOnSumIdealsEqualsSumOp}
Let $L$~be a finite-dimensional centerless Lie algebra, which is decomposed into the direct sum of its ideals $I_1$, \dots $I_k$:
$L=I_1\oplus \dots \oplus I_k$. 
An operator~$J$ is Lie-orthogonal on~$L$ if and only if it can be represented as
\[
J=J_1\oplus \dots \oplus J_k,
\]
where for any $i=1,\dots,k$ the operator~$J_i$ is Lie-orthogonal on $I_i$. 
If the center of the algebra~$L$ is not zero then the same representation is true up to the equivalence relation of Lie-orthogonal operators. 
\end{lemma}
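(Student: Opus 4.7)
The ``if'' direction is exactly the proposition on direct sums of Lie-orthogonal operators applied componentwise, so I concentrate on the ``only if'' direction. The plan is to reduce to the case when $J$ is invertible and then exploit the reformulation $[Jx,y]=[x,J^{-1}y]$ against the ideal structure of the summands. Invertibility is automatic in the centerless case by the earlier corollary; in the nonzero-center case the conclusion is only claimed up to equivalence, so by the other corollary $J$ may be replaced by an equivalent invertible Lie-orthogonal operator. Henceforth assume $J$ is invertible.

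Let $\pi_\ell\colon L\to I_\ell$ be the projections attached to the decomposition $L=I_1\oplus\cdots\oplus I_k$. The key claim is that $\pi_\ell(Jx)\in Z(I_\ell)\subseteq Z(L)$ whenever $x\in I_i$ and $\ell\neq i$. Indeed, pick any $y\in I_\ell$ and use the reformulation
\[
[Jx,y]=[x,J^{-1}y].
\]
The left-hand side lies in $I_\ell$ since $I_\ell$ is an ideal, while the right-hand side lies in $I_i$ since $I_i$ is an ideal and $x\in I_i$; as the sum is direct, both sides vanish. Expanding $[Jx,y]=\sum_m[\pi_m(Jx),y]$ and using $[I_m,I_\ell]=0$ for $m\neq\ell$ collapses the sum to $[\pi_\ell(Jx),y]=0$. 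Since $y\in I_\ell$ was arbitrary, $\pi_\ell(Jx)$ centralises $I_\ell$ inside $I_\ell$, i.e.\ lies in $Z(I_\ell)$, and the identity $Z(L)=\bigoplus_\ell Z(I_\ell)$ (immediate for a direct sum of ideals) places it in $Z(L)$.

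Now set $J_i:=\pi_i\circ J|_{I_i}\colon I_i\to I_i$. The claim above yields $J(x_i)-J_i(x_i)=\sum_{\ell\neq i}\pi_\ell(Jx_i)\in Z(L)$ for every $x_i\in I_i$, so $J-(J_1\oplus\cdots\oplus J_k)$ has image in $Z(L)$ and $J$ is equivalent to $J_1\oplus\cdots\oplus J_k$. By Lemma~\ref{LemmaOnIndeterminacyOfLieOrtOpsOnCenter2} this sum is itself Lie-orthogonal on $L$, whence for $x_i,y_i\in I_i$ I read off
\[
[J_ix_i,J_iy_i]=[(J_1\oplus\cdots\oplus J_k)x_i,(J_1\oplus\cdots\oplus J_k)y_i]=[x_i,y_i],
\]
so each $J_i$ is Lie-orthogonal on $I_i$. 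In the centerless case, $Z(L)=\{0\}$ forces $\pi_\ell(Jx_i)=0$, upgrading equivalence to the genuine decomposition $J=J_1\oplus\cdots\oplus J_k$.

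The main obstacle is the reliance on $J^{-1}$ in the key identity: in the nonzero-center situation $J$ may fail to be invertible, and the constructed $J_i$ themselves need not be invertible, so the only clean way to access the inverse seems to be through the corollary producing an equivalent invertible representative. With that detour in place the proof reduces to the short direct computation above, using only ideal closure and the direct-sum decomposition of the center.
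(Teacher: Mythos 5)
Your proof is correct and follows essentially the same route as the paper: reduce to an invertible representative via the equivalence relation, use the identity $[Jx,y]=[x,J^{-1}y]$ to show that the cross-commutators land in $I_i\cap I_\ell=\{0\}$, and conclude that the off-diagonal blocks of $J$ have image in the center. The only cosmetic differences are that you treat general $k$ directly where the paper reduces to $k=2$, and you handle the centerless case as the $Z(L)=\{0\}$ specialization of the same computation rather than invoking Lemma~\ref{LemmaOnGenInvIdeals}.
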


\begin{proof}
The statement of the lemma is equivalent to the fact that each of the ideals is invariant 
(up to the equivalence relation) with respect to the action of any Lie-orthogonal operator.
It suffices to prove the invariance of the ideals~$I_1$, \dots, $I_k$ for the particular case $k=2$. 
The center~$Z$ of the algebra $L=I_1\oplus I_2$ can be represented in the form $Z=Z_1\oplus Z_2$, where $Z_1=Z\cap I_1$ and $Z_2=Z\cap I_2$.
Moreover, $L/I_1\simeq I_2$ and $L/I_2\simeq I_1$.
Then, if the algebra~$L$ is centerless, the invariance of the ideals $I_1$ and $I_2$ 
directly follows from Lemma~\ref{LemmaOnGenInvIdeals}, as these ideals are also centerless.

Suppose that the center of the algebra~$L$ is not zero. Up to the equivalence relation we can consider only invertible Lie-orthogonal operators. We fix such an operator~$J$.
By $P_1$ and $P_2$ we denote the operators of the projection on the ideals~$I_1$ and~$I_2$ respectively, which are associated with the decomposition $L=I_1\oplus I_2$. We have $P_1+P_2=\Id_L$.
For arbitrary elements $x\in I_1$ and $y\in I_2$ we derive that the commutator $[Jx, y]=[x, J^{-1}y]$ belongs to the intersection of the ideals~$I_1$ and~$I_2$, therefore, it is equal to~$0$.
This means that $P_2Jx\in Z_2$ and $P_1Jy\in Z_1$. Hence the images of the operators $P_1JP_2$ and $P_2JP_1$ are contained in~$Z$.
Consider the operator $\tilde J=J-P_1JP_2-P_2JP_1$. It is equivalent to the operator~$J$ in view of the definition.
Moreover, if $x\in I_1$ then $\tilde Jx=Jx-P_2Jx=P_1Jx\in I_1$. Analogously, if $y\in I_2$ then $\tilde Jy=Jy-P_1Jy=P_2Jy\in I_2$.
\end{proof}

\subsection{Generalized eigenspaces of Lie-orthogonal operators}

Suppose that the underlying field is of characteristic zero and algebraically closed.

\begin{lemma}[\cite{Petravchuk&Bilun2003}] \label{LemmaOnCommRootSubspEq0}
Suppose that $L$ is a finite-dimensional Lie algebra, $J$ is a Lie-orthogonal operator on~$L$ and $\lambda$ and~$\mu$ are eigenvalues of~$J$ such that $\lambda\mu\ne 1$. 
If $L_\lambda$ and $L_\mu$ are the generalized eigenspaces of~$J$ corresponding to $\lambda$ and~$\mu$, respectively, then $[L_\lambda, L_\mu]=\{0\}$.
\end{lemma}

\begin{proof}
We prove this lemma without choosing a basis of the algebra $L$.
Consider eigenvalues~$\lambda$ and $\mu$ of the operator $J$ such that $\lambda\mu\ne 1$.
The corresponding generalized eigenspaces $L_\lambda$ and~$L_\mu$ can be represented in the form:
\[
L_\lambda=\bigcup_{i=0}^{k_\lambda}\ker(J-\lambda E)^i, \quad
L_\mu=\bigcup_{j=0}^{k_\mu}\ker(J-\mu E)^j,
\]
where $k_\lambda$ and $k_\mu$ denote the multiplicity of $\lambda$ and $\mu$ as the roots of the characteristic polynomial of the operator $J$ and $E$ denotes $\Id_L$.
Therefore, it suffices to prove that for any $i$ and $j$ arbitrary elements $x\in \ker(J-\lambda E)^i$ and $y\in \ker(J-\mu E)^j$ commutate.
The last assertion is proved by induction with respect to $m=i+j$.
If $m=0$ then $i=j=0$, both the elements~$x$ and~$y$ are zero as elements of the kernel of the identity operator and hence $[x, y]=0$.
Supposing that this assertion is true for all $(i,j)$ with $i+j<m$, we prove it for~$m$.
The definition of Lie orthogonality implies
\begin{align*}
[x,y]&=[Jx, Jy]=[(J-\lambda E)x, (J-\mu E)y]+[(J-\lambda E)x, \mu y]+[\lambda x, (J-\mu E)y]+[\lambda x, \mu y]\\&=\lambda\mu[x, y].
\end{align*}
The first three summands are zero by the induction hypothesis as $(J-\lambda E)x\in\ker(J-\lambda E)^{i-1}$ and $(J-\mu E)y\in\ker(J-\mu E)^{j-1}$.
Taking into account that $\lambda\mu\ne 1$, we obtain $[x, y]=0$.
\end{proof}

We generalize Lemma~3 from~\cite{Petravchuk&Bilun2003} and 
the main theorem of the same paper via getting rid of the condition that the corresponding algebra is centerless. 

\begin{lemma}\label{LemmaOnIdealsRelatedToRootSubsp}
Let $L$ be a finite-dimensional Lie algebra with the center~$Z$ and let $J$ be a Lie-orthogonal operator on~$L$. 
For any eigenvalue~$\lambda$ of the operator~$J$, we consider the subspace $I_\lambda$, where 
$I_\lambda=L_\lambda\oplus L_{\lambda^{-1}}\oplus Z$ if $\lambda\notin\{\pm 1, 0\}$,
$I_\lambda=L_\lambda\oplus Z$ if $\lambda=\pm 1$, 
$I_\lambda=Z$ if $\lambda=0$.
Then the subspace $I_\lambda$ is an ideal of the algebra~$L$.
\end{lemma}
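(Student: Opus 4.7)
The plan is to verify $[I_\lambda,L]\subseteq I_\lambda$ directly, working with the Fitting decomposition $L=\bigoplus_\mu L_\mu$ of~$L$ with respect to~$J$ (available since the field is algebraically closed of characteristic~$0$). The case $\lambda=0$ is immediate, since $I_0=Z$ is central, so from now on assume $\lambda\ne 0$. I would split the closure condition according to whether the first argument of the bracket lies in $Z$, in $L_\lambda$, or, when $\lambda\notin\{\pm 1,0\}$, in $L_{\lambda^{-1}}$; the $Z$-case is trivial.

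For $u\in L_\lambda$ and $v=\sum_\mu v_\mu\in L$, Lemma~\ref{LemmaOnCommRootSubspEq0} collapses $[u,v]$ to the single surviving term $[u,v_{\lambda^{-1}}]$, and symmetrically for $u\in L_{\lambda^{-1}}$. So the entire ideal condition reduces to the single claim
\[
[L_\lambda,L_{\lambda^{-1}}]\subseteq L_\lambda+L_{\lambda^{-1}}+Z,
\]
with the convention that $L_{\lambda^{-1}}$ coincides with $L_\lambda$ when $\lambda=\pm 1$.

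To prove this claim I would fix $x\in L_\lambda$, $y\in L_{\lambda^{-1}}$, set $z=[x,y]$, and decompose $z=\sum_\mu z_\mu$ with $z_\mu\in L_\mu$. For any $w\in L_\nu$ with $\nu\notin\{\lambda,\lambda^{-1}\}$, both $\lambda\nu\ne 1$ and $\lambda^{-1}\nu\ne 1$, so Lemma~\ref{LemmaOnCommRootSubspEq0} forces $[x,w]=[y,w]=0$, and the Jacobi identity gives $[z,w]=0$. Now given any $\mu\notin\{\lambda,\lambda^{-1},0\}$, apply this with $\nu=\mu^{-1}$ (which still lies outside $\{\lambda,\lambda^{-1}\}$ because $\mu$ does): the equation $[z,L_{\mu^{-1}}]=0$, together with Lemma~\ref{LemmaOnCommRootSubspEq0} killing the contributions $[z_{\mu'},L_{\mu^{-1}}]$ for $\mu'\ne\mu$, isolates $[z_\mu,L_{\mu^{-1}}]=0$. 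Since $z_\mu\in L_\mu$ automatically commutes with every other $L_\nu$, it then commutes with all of~$L$ and hence lies in~$Z$. The component $z_0$ lies in $L_0\subseteq Z$ by the earlier lemma on the zero Fitting component, so every $z_\mu$ with $\mu\notin\{\lambda,\lambda^{-1}\}$ is central, proving the claim.

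The step I expect to be the main obstacle is precisely this isolation argument: because $J$ is neither a derivation nor an automorphism, one cannot assign the bracket $z=[x,y]$ a ``weight'' directly from those of $x$ and $y$, so it is not a priori clear which root subspaces $z$ meets. The workaround is to detect each off-diagonal component $z_\mu$ indirectly, by pairing it against $L_{\mu^{-1}}$ through Lemma~\ref{LemmaOnCommRootSubspEq0}; this converts the question of locating $z$ into a vanishing statement about commutators, which is settled by the Jacobi identity together with the commutation lemma for root subspaces.
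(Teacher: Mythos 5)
Your proof is correct and rests on the same engine as the paper's: Lemma~\ref{LemmaOnCommRootSubspEq0} together with the observation that any element commuting with all of $S_\lambda=\sum_{\mu\notin\{\lambda,\lambda^{-1}\}}L_\mu$ has all its root components outside $L_\lambda+L_{\lambda^{-1}}$ lying in the center. The only difference is packaging: the paper identifies $I_\lambda$ with the centralizer $C_L(S_\lambda)$ (getting the subalgebra property for free), whereas you verify $[I_\lambda,L]\subseteq I_\lambda$ directly by locating the root components of $[x,y]$ via the Jacobi identity --- an equivalent, if slightly more hands-on, route.
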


\begin{proof}
For the case $\lambda=0$ the lemma is obvious. Suppose that $\lambda \neq 0$. 
Consider the subspace $S_\lambda=\sum_{\mu\notin\{\lambda,\lambda^{-1}\}}L_\mu$ of~$L$. 
We have $I_\lambda+S_\lambda=L$.
In view of Lemma~\ref{LemmaOnCommRootSubspEq0} the centralizer~$C_{\lambda}$ of $S_\lambda$ in~$L$ contains~$I_\lambda$.
If an element of~$C_{\lambda}$ does not belong to~$I_\lambda$ then $U=(S_\lambda\cap C_\lambda)\setminus I_\lambda\neq\varnothing$.
The elements of the set~$U$ commute with all elements of the subspace~$S_\lambda$ by the definition of centralizer
as well as with all elements of the subspace~$I_\lambda$ according to Lemma~\ref{LemmaOnCommRootSubspEq0} and the definition of center. 
Therefore, the set~$U$ is contained in the center~$Z$ which is a subset of~$I_\lambda$. At the same time, $U$ is contained in the compliment to $I_\lambda$ by its definition.
As a result, $C_{\lambda}=I_\lambda$ and hence $I_\lambda$ is a subalgebra of the algebra~$L$.
Then Lemma~\ref{LemmaOnCommRootSubspEq0} implies that $I_\lambda$ is an ideal of this algebra.
\end{proof}

\begin{corollary}
If $\lambda\neq\pm 1$, $I_\lambda$ is a solvable ideal of~$L$ of solvability degree not greater than two.
\end{corollary}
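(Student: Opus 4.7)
My plan is to split into cases based on the definition of $I_\lambda$. For $\lambda=0$, $I_0=Z$ is abelian, so $[I_0,I_0]=0$ and the solvability degree is at most one. For $\lambda \ne 0,\pm 1$, I would first identify the first derived subalgebra $I_\lambda'$. Since $\lambda^2 \ne 1$ (because $\lambda\ne\pm 1$) and likewise $\lambda^{-2}\ne 1$, applying Lemma~\ref{LemmaOnCommRootSubspEq0} with both root subspaces equal gives $[L_\lambda,L_\lambda]=0$ and $[L_{\lambda^{-1}},L_{\lambda^{-1}}]=0$. Together with the centrality of $Z$, a bilinear expansion of $[I_\lambda,I_\lambda]$ kills every summand except the cross term, yielding
\[
I_\lambda' = [L_\lambda,L_{\lambda^{-1}}].
\]

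The remaining and main task is to prove that $I_\lambda'$ is abelian. I would take two spanning elements $m_i=[x_i,y_i]$ with $x_i\in L_\lambda$, $y_i\in L_{\lambda^{-1}}$ and expand $[m_1,m_2]$ using that $\mathrm{ad}_{m_1}$ is a derivation. Because $[x_1,x_2]=0$ and $[y_1,y_2]=0$, the inner brackets simplify via Jacobi to $[[x_1,y_1],x_2]=[x_1,[y_1,x_2]]$ and $[[x_1,y_1],y_2]=-[y_1,[x_1,y_2]]$. Substituting these and then applying Jacobi once more should reorganize everything into a sum of double commutators of the form $[[x_i,y_j],[x_k,y_l]]$ that pair up antisymmetrically and cancel, leaving $[m_1,m_2]=0$.

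The delicate point, which I expect to be the main obstacle, is to arrange the Jacobi manipulations so they do not immediately collapse to tautologies of the derivation identity. The key tools I would rely on are the symmetric identities $[[x,y],x']=[[x',y],x]$ for $x,x'\in L_\lambda$ and $[[x,y],y']=[[x,y'],y]$ for $y,y'\in L_{\lambda^{-1}}$, each an immediate consequence of the Jacobi identity combined with $[L_\lambda,L_\lambda]=0=[L_{\lambda^{-1}},L_{\lambda^{-1}}]$. Using these symmetries to reshuffle the nested commutators into antisymmetric pairs should force the required cancellation, establishing $[I_\lambda',I_\lambda']=0$ and hence that $I_\lambda$ has solvability degree not greater than two.
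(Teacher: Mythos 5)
Your reduction is sound up to the last step: the case $\lambda=0$ is immediate, and for $\lambda\notin\{0,\pm1\}$ Lemma~\ref{LemmaOnCommRootSubspEq0} indeed gives $[L_\lambda,L_\lambda]=[L_{\lambda^{-1}},L_{\lambda^{-1}}]=0$, so that $I_\lambda'=[L_\lambda,L_{\lambda^{-1}}]$ and everything reduces to showing that this subspace is abelian. But the step you yourself flag as ``the delicate point'' is a genuine gap, and it cannot be closed in the way you propose. The vanishing of $[[x_1,y_1],[x_2,y_2]]$ is not a formal consequence of the Jacobi identity together with $[x_1,x_2]=[y_1,y_2]=0$: if you expand by the derivation property of $\mathrm{ad}$, apply your symmetric identities $[[x,y],x']=[[x',y],x]$ and $[[x,y],y']=[[x,y'],y]$, and recombine, the cross terms of the form $[[x_1,y_2],[x_2,y_1]]$ cancel in pairs and you recover exactly the expression you started from --- every such computation is a tautology, not by bad luck but because the identity fails in the Lie algebra generated freely by two two-dimensional abelian subalgebras. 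So no reshuffling of nested commutators of the four chosen elements can force the required cancellation.

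What is true, and what the paper actually invokes, is the theorem that a Lie algebra which is the \emph{vector-space sum} of two abelian subalgebras is metabelian (the Lie-algebra analogue of Ito's theorem; see \cite{Kostrikin1982} or Lemma~1 of \cite{Petravchuk1988}). The paper's proof of the corollary consists only of exhibiting $I_\lambda$ as such a sum, namely $I_\lambda=L_\lambda\dotplus\tilde L_\lambda$ with $\tilde L_\lambda=L_{\lambda^{-1}}+\tilde Z_\lambda$, where $\tilde Z_\lambda$ is a complement of $L_\lambda\cap Z$ in the center $Z$, and then citing that theorem. The proof of that theorem genuinely uses the global decomposition --- each element of $[A,B]$ must itself be re-expanded as a sum of an element of $A$ and an element of $B$ before bracketing again --- and not merely identities among four fixed generators. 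To complete your argument you must either cite this result, as the paper does, or supply its proof; as written, the abelianness of $I_\lambda'$ is not established.
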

\begin{proof}
As $\lambda \neq\pm 1$, Lemma~\ref{LemmaOnCommRootSubspEq0} implies that $L_\lambda$ and $L_{\lambda^{-1}}$ are Abelian subalgebras of the algebra~$L$ and, therefore, of the ideal~$I_\lambda$.
We introduce the notation $Z_\lambda=L_\lambda\cap Z$ and choose the subspace $\tilde Z_\lambda$ of the center~$Z$ such that $Z=Z_\lambda\dotplus\tilde Z_\lambda=Z_\lambda\oplus\tilde Z_\lambda$.
Then we have $I_\lambda=L_\lambda\dotplus\tilde L_\lambda$, where $\tilde L_\lambda=L_{\lambda^{-1}}+\tilde Z_\lambda$.
As the subalgebras~$L_\lambda$ and~$\tilde L_\lambda$ are Abelian, the ideal $I_\lambda$ is a sum of two Abelian subalgebras. 
Therefore, it is solvable and its solvability degree is not greater than two (see e.g.~\cite{Kostrikin1982} or Lemma~1 in~\cite{Petravchuk1988}).
\end{proof}

\begin{corollary}\label{CorollaryEigenJOnSimpleAlg}
If $L$ is a finite-dimensional simple Lie algebra and $J$ is a Lie-orthogonal operator on~$L$ 
then all eigenvalues of~$J$ are equal to either $1$ or $-1$.
\end{corollary}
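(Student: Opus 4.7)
The plan is to exploit simplicity of $L$ together with the preceding ideal-theoretic results: any ideal of $L$ must be either $0$ or $L$, and $L$ itself is non-abelian and non-solvable. These two facts rule out every candidate eigenvalue other than $\pm 1$.

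First I would dispose of the eigenvalue $0$. Since $L$ is simple, its center $Z$ is a proper ideal, hence $Z=\{0\}$. The lemma that places the zero Fitting component inside $Z$ then forces $L_0=\{0\}$, so $0$ cannot be an eigenvalue of $J$. Next, suppose for contradiction that $J$ has an eigenvalue $\lambda$ with $\lambda\ne 0$ and $\lambda\ne\pm 1$. By Lemma~\ref{LemmaOnIdealsRelatedToRootSubsp}, the subspace $I_\lambda=L_\lambda\oplus L_{\lambda^{-1}}\oplus Z$ is an ideal of $L$; since $Z=\{0\}$, we have $I_\lambda=L_\lambda\oplus L_{\lambda^{-1}}$, and $L_\lambda\ne\{0\}$ ensures $I_\lambda\ne\{0\}$. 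By the corollary preceding the statement, $I_\lambda$ is solvable (of solvability degree at most two). Since $L$ is simple and therefore not solvable, $I_\lambda$ cannot equal $L$, while its non-triviality forbids $I_\lambda=\{0\}$. This contradicts the simplicity of $L$, so no such eigenvalue $\lambda$ exists.

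Putting the two cases together, the spectrum of $J$ is contained in $\{1,-1\}$, which is exactly the statement of the corollary. There is no genuine obstacle here: the whole argument is a one-line application of the already established structural lemmas combined with the two defining properties of a simple Lie algebra (no proper non-zero ideals and non-solvability). The only point to be careful about is remembering to invoke $Z=\{0\}$ so that the general-case form of $I_\lambda$ collapses to $L_\lambda\oplus L_{\lambda^{-1}}$, and that $\lambda=0$ has to be treated separately via the zero Fitting component lemma rather than through $I_\lambda$.
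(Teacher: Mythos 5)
Your proof is correct and follows essentially the same route as the paper's: both arguments rest on $Z=\{0\}$ for a simple algebra (which kills the eigenvalue $0$ via the zero Fitting component lemma), on Lemma~\ref{LemmaOnIdealsRelatedToRootSubsp} together with its solvability corollary, and on the fact that a simple algebra is non-solvable and has no proper nonzero ideals. The only point worth adding is that the paper's proof draws the slightly stronger conclusion that $J$ has a \emph{single} root subspace; if the corollary is read as saying that all eigenvalues coincide and the common value is $\pm1$, you need one more line --- if both $1$ and $-1$ occurred, then $I_1=L_1$ and $I_{-1}=L_{-1}$ would be complementary nonzero proper ideals, contradicting simplicity --- whereas for the weaker reading ``every eigenvalue lies in $\{1,-1\}$'' your argument is already complete.
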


\begin{proof}
A simple algebra is centerless, is not solvable and contains no ideals. 
Therefore, the operator~$J$ possesses a single generalized eigenspace. The corresponding eigenvalue equals either $1$ or~$-1$.
\end{proof}

\begin{corollary}\label{CorollaryEigenJOnSemisimpleAlg}
If $L$ is a finite-dimensional semi-simple Lie algebra and $J$ is a Lie-orthogonal operator on~$L$ 
then each eigenvalue of the operator~$J$ is equal to either $1$ or $-1$.
Each simple component of the algebra~$L$ is contained in one of the generalized eigenspaces of~$J$.
\end{corollary}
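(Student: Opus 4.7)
The plan is to reduce the semi-simple case to the simple case via the direct-sum decomposition of a semi-simple Lie algebra. Recall that every finite-dimensional semi-simple Lie algebra $L$ over an algebraically closed field of characteristic~$0$ admits a decomposition $L=L_1\oplus\dots\oplus L_k$ into simple ideals, and that $L$ is centerless (indeed, each $L_i$ is centerless, so $Z(L)=0$).

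First I would invoke Lemma~\ref{LemmaOnOpOnSumIdealsEqualsSumOp}. Since $L$ is centerless, the lemma gives a genuine (not merely equivalence-class) decomposition $J=J_1\oplus\dots\oplus J_k$, where each $J_i$ is a Lie-orthogonal operator on the simple ideal~$L_i$. Next I would apply Corollary~\ref{CorollaryEigenJOnSimpleAlg} to each summand: on a simple Lie algebra, a Lie-orthogonal operator has a single root subspace, which coincides with the whole algebra, and the associated eigenvalue is either $1$ or $-1$. Consequently, for each~$i$ either $J_i=\Id_{L_i}$ acts with single eigenvalue~$1$ on~$L_i$, or $J_i$ has single eigenvalue~$-1$, so $L_i$ is entirely contained in the root subspace $L_{\varepsilon_i}$ of~$J$ for some sign $\varepsilon_i\in\{\pm 1\}$.

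Finally, since the spectrum of a direct sum is the union of the spectra of its summands, every eigenvalue of $J$ on $L$ lies in $\{1,-1\}$. Grouping the indices by the value of~$\varepsilon_i$, one sees that the root subspace $L_{+1}$ is the direct sum of those $L_i$ with $\varepsilon_i=+1$, and analogously for $L_{-1}$; in particular each simple component $L_i$ sits in exactly one of the two root subspaces.

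I do not foresee a serious obstacle: the argument is essentially bookkeeping given Lemmas already established. The only point that requires attention is the applicability of Lemma~\ref{LemmaOnOpOnSumIdealsEqualsSumOp} in its ``centerless'' form, which is justified by noting that a semi-simple algebra has trivial center, so no passage to equivalence classes is needed and the decomposition of $J$ is a true operator decomposition rather than one modulo operators whose image lies in~$Z$.
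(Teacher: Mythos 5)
Your argument is correct, but it runs in the opposite direction from the paper's. The paper does not decompose $J$ along the simple components at all: it works with the ideals $I_\lambda$ of Lemma~\ref{LemmaOnIdealsRelatedToRootSubsp} and the corollary that $I_\lambda$ is solvable when $\lambda\ne\pm1$. Since a semi-simple algebra is centerless and has no nonzero solvable ideals, every $I_\lambda$ with $\lambda\notin\{1,-1\}$ vanishes, so $L=L_1\oplus L_{-1}$ as a sum of two ideals; each simple component, being a minimal ideal, then meets one of these trivially or is contained in it. You instead first split $L$ into simple ideals, use Lemma~\ref{LemmaOnOpOnSumIdealsEqualsSumOp} (legitimately, in its centerless form) to split $J$ accordingly, and apply Corollary~\ref{CorollaryEigenJOnSimpleAlg} componentwise. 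Both routes are valid and use only material already established at this point in the paper; the paper's route has the advantage of producing the statement ``$L$ is the direct sum of the two root subspaces, which are ideals'' intrinsically, without invoking the structure theorem for semi-simple algebras, while yours is more mechanical bookkeeping once the two cited lemmas are granted. One small caution: your phrase ``either $J_i=\Id_{L_i}$'' overstates what Corollary~\ref{CorollaryEigenJOnSimpleAlg} gives --- at this stage you only know $J_i$ has the single eigenvalue $1$ (the identification $J_i=\pm\Id_{L_i}$ is Theorem~\ref{TheoremOnLieOrtOpsOfSimpleAlg}, proved later via the Killing form). Your conclusion does not depend on this, since $L_i$ being the sole root subspace of $J_i$ already places it inside the root subspace $L_{\varepsilon_i}$ of $J$.
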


\begin{proof}
A semi-simple algebra is centerless and contains no nonzero solvable ideals.
Therefore, the algebra~$L$ is a direct sum of two generalized eigenspaces, $L_1$ and~$L_{-1}$, 
which are also ideals and correspond to the eigenvalues~$1$ and~$-1$, respectively.
As any simple component of the algebra~$L$ is an ideal in this algebra and does not contain proper ideals, 
it either does not intersect with one of the above generalized eigenspaces or is contained in it. \end{proof}

\begin{theorem}\label{TheoremOnSolvabilityOfLieAlgWithLieOrtOp}
Let $L$ be a finite-dimensional Lie algebra, $J$ be a Lie-orthogonal operator on~$L$ and all its eigenvalues differ from~$\pm 1$. 
Then $L$ is a solvable Lie algebra of solvability degree not greater than two. 
\end{theorem}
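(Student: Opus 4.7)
The plan is to combine the root-space decomposition of $L$ with respect to $J$ (available since the underlying field is algebraically closed of characteristic zero) with the pairing structure forced by Lemma~\ref{LemmaOnCommRootSubspEq0}. First I would write $L=\bigoplus_\lambda L_\lambda$. The hypothesis that no eigenvalue equals $\pm 1$ means the nonzero eigenvalues partition into disjoint unordered pairs $\{\lambda_i,\lambda_i^{-1}\}$, $i=1,\dots,s$, with each $\lambda_i\ne 0,\pm 1$; and if $0$ is an eigenvalue, $L_0\subseteq Z$ by the unlabelled Petravchuk--Bilun lemma above. The degenerate case $s=0$ forces $L=L_0\subseteq Z$, so $L$ is abelian and we are done; otherwise assume $s\ge 1$.

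For each pair, Lemma~\ref{LemmaOnIdealsRelatedToRootSubsp} produces the ideal $I_i:=I_{\lambda_i}=L_{\lambda_i}\oplus L_{\lambda_i^{-1}}\oplus Z$, and the corollary following it gives $I_i''=0$. Since $Z\subseteq I_i$ for every $i$, the root-space decomposition yields $L=I_1+\dots+I_s$.

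The decisive step is to check that distinct $I_i$'s commute: $[I_i,I_j]=0$ for $i\ne j$. The center commutes with everything; and for $\mu\in\{\lambda_i,\lambda_i^{-1}\}$, $\nu\in\{\lambda_j,\lambda_j^{-1}\}$ with $i\ne j$ the pairs are disjoint, so $\mu\ne\nu^{-1}$, whence $\mu\nu\ne 1$, and Lemma~\ref{LemmaOnCommRootSubspEq0} gives $[L_\mu,L_\nu]=0$. With pairwise commutation in hand, a one-line expansion gives $L'=\sum_i I_i'$ and then $L''\subseteq\sum_i I_i''+\sum_{i\ne j}[I_i,I_j]=0$, which is exactly the desired conclusion.

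I do not anticipate any serious obstacle: all the genuinely hard input is already packaged in the earlier lemmas. The one thing requiring a bit of care is the bookkeeping around the center --- pieces of $Z$ sitting inside the various $L_\lambda$ and the shared summand $Z$ of every $I_i$ --- but since $Z$ commutes with everything and is common to all $I_i$, no double-counting issue arises in either the decomposition $L=\sum_i I_i$ or the commutator computation.
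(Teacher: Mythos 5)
Your proposal is correct and follows essentially the same route as the paper's own proof: decompose $L$ as the (non-direct) sum of the ideals $I_\lambda$ from Lemma~\ref{LemmaOnIdealsRelatedToRootSubsp}, use the corollary giving $I_\lambda''=0$ for $\lambda\ne\pm1$, and use Lemma~\ref{LemmaOnCommRootSubspEq0} to see that distinct ideals commute, whence $L''=0$. You merely spell out the details (the pairing of eigenvalues, the $L_0\subseteq Z$ case, and the explicit computation of $L''$) that the paper leaves implicit.
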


\begin{proof}
Theorem's hypotheses imply that $L$ is a sum of the ideals~$I_\lambda$, where $\lambda$ runs through nonzero eigenvalues of the operator~$J$. 
(See the definition of~$I_\lambda$ in Lemma~\ref{LemmaOnIdealsRelatedToRootSubsp}.)
Note that this sum is not direct as the intersection of every pair of these ideals coincides with the center~$Z$ of the algebra~$L$. 
For each~$\lambda$ the ideal~$I_\lambda$ is a solvable ideal of solvability degree not greater than two. These ideals commutate with each other.
Therefore, the algebra~$L$ is also solvable of solvability degree not greater than two.
\end{proof}

\subsection{Lie-orthogonal automorphisms}

In this section we study the intersection of the automorphism group of a Lie algebra~$L$ and the set of its Lie-orthogonal operators.

\begin{lemma}\label{LemmaOnLieOrtAut1}
Let $J$ be a Lie-orthogonal automorphism on~$L$ and $L'=[L,L]$ denote the derived algebra of~$L$. 
Then $\ker (J-\Id_L)\supset L'$ and $[\mathrm{im} (J-\Id_L), L']=\{0\}$. 
\end{lemma}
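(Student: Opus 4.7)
The plan is to exploit two facts simultaneously: because $J$ is an automorphism, $J[x,y]=[Jx,Jy]$ for all $x,y\in L$; because $J$ is Lie-orthogonal, $[Jx,Jy]=[x,y]$. Chaining these two identities immediately gives the first inclusion.

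More precisely, for any $x,y\in L$ I would compute $J[x,y]=[Jx,Jy]=[x,y]$, so every generator of $L'$ lies in $\ker(J-\Id_L)$, and since the kernel is a linear subspace I conclude $L'\subseteq\ker(J-\Id_L)$, which is exactly the first claim.

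For the second claim I would use the reformulated Lie-orthogonality condition (available because an automorphism is invertible): $[Jx,y]=[x,J^{-1}y]$ for all $x,y\in L$. Let $z=(J-\Id_L)x\in\mathrm{Im}(J-\Id_L)$ and let $w\in L'$. By the first part we already know $Jw=w$, hence $J^{-1}w=w$. Then
\begin{gather*}
[z,w]=[Jx-x,w]=[Jx,w]-[x,w]=[x,J^{-1}w]-[x,w]=[x,w]-[x,w]=0.
\end{gather*}
Since $L'$ is spanned by brackets and the expression is linear in $w$, this shows $[\mathrm{Im}(J-\Id_L),L']=0$.

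There is no real obstacle here: the whole lemma is a direct manipulation of the two defining identities (automorphism property and Lie-orthogonality, or equivalently its invertible reformulation $[Jx,y]=[x,J^{-1}y]$). The only subtlety worth flagging explicitly is that the reformulated identity relies on invertibility of $J$, which is given by the automorphism hypothesis, so there is no need to invoke the Fitting decomposition or any structural results from earlier sections.
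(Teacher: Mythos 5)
Your proof is correct. The first half coincides with the paper's argument word for word: $J[x,y]=[Jx,Jy]=[x,y]$, hence $L'\subseteq\ker(J-\Id_L)$. For the second half the paper avoids the inverse entirely: it computes $[[x,y],Jz]=[[Jx,Jy],Jz]=[J[x,y],Jz]=[[x,y],z]$, using Lie-orthogonality twice and the homomorphism property once, and reads off $[[x,y],Jz-z]=0$. Your route instead passes through the reformulated identity $[Jx,y]=[x,J^{-1}y]$ together with the fixed-point property $J^{-1}w=w$ for $w\in L'$ obtained from the first half. Both are short direct computations from the two defining identities; the only structural difference is that your second claim logically depends on the first, whereas the paper's two claims are proved independently. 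Nothing of substance hinges on this, and your explicit remark that invertibility comes for free from the automorphism hypothesis is the right point to flag.
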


\begin{proof}
For any elements $x$ and $y$ from $L$ we have $J[x,y]=[Jx,Jy]=[x,y]$. This implies that $(J-\Id_L)[x,y]=0$.
Therefore, $\ker (J-\Id_L)\supset L'$.
Analogously, for any elements $x$, $y$ and $z$ from $L$ we get $[[x,y],Jz]=[[Jx,Jy],Jz]=[J[x,y],Jz]=[[x,y],z]$, 
i.e., $[[x,y],Jz-z]=0$. This means that $[\mathrm{im} (J-\Id_L), L']=\{0\}$.
\end{proof}

\begin{corollary}
Any Lie-orthogonal automorphism on a perfect algebra acts identically.
\end{corollary}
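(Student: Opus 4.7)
The plan is to apply Lemma~\ref{LemmaOnLieOrtAut1} directly, observing that the condition of being perfect collapses one of its conclusions to the entire algebra. Recall that a Lie algebra~$L$ is called \emph{perfect} when $L=L'=[L,L]$.

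The first step is to take an arbitrary Lie-orthogonal automorphism~$J$ on a perfect algebra~$L$ and invoke Lemma~\ref{LemmaOnLieOrtAut1}, which gives $\ker(J-\Id_L)\supset L'$. The second step is simply to substitute $L'=L$, so that $\ker(J-\Id_L)\supset L$; since the kernel is always a subspace of~$L$, this forces $\ker(J-\Id_L)=L$, i.e., $(J-\Id_L)x=0$ for every $x\in L$, hence $J=\Id_L$.

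There is essentially no obstacle here: the corollary is a one-line consequence of the first inclusion in Lemma~\ref{LemmaOnLieOrtAut1} once $L=L'$ is substituted. The second conclusion of that lemma, $[\mathrm{Im}(J-\Id_L),L']=0$, is not even needed in the argument, although one could mention in passing that under the perfectness assumption it degenerates to the statement $[\mathrm{Im}(J-\Id_L),L]=0$, which together with the first inclusion independently forces $\mathrm{Im}(J-\Id_L)$ to lie in the center, a weaker conclusion than what the direct argument yields.
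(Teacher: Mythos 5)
Your argument is correct and is exactly the intended one: the paper states this as an immediate corollary of Lemma~\ref{LemmaOnLieOrtAut1} without further proof, and substituting $L'=L$ into the inclusion $\ker(J-\Id_L)\supset L'$ is precisely the step being relied upon. Nothing further is needed.
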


Recall that a Lie algebra is called perfect if it coincides with its derived algebra.

It is obvious that
Lie-orthogonal automorphisms on a Lie algebra can be characterized as automorphisms which act identically on the corresponding derived algebra
or as Lie-orthogonal operators satisfying the same condition.

\begin{lemma}
Let a Lie algebra~$L$ be finite dimensional and~$J$ be a Lie-orthogonal automorphism on~$L$.
Then $\mathrm{im} (J-\Id_L)\subset R$, where $R$~is the radical of~$L$.
\end{lemma}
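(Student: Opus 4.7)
The plan is to reduce the statement to the Killing-form characterization of the radical, $R=(L')^\perp$, that was already used in the proof of Lemma~\ref{LemmaOnInvarienceOfRad}. Concretely, I will show that every $x \in \mathrm{Im}(J-\Id_L)$ is Killing-orthogonal not only to $L'$ but to the whole of $L$, which is strictly stronger than what the statement requires.

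The starting point is the preceding lemma, which yields $[\mathrm{Im}(J-\Id_L),L']=0$, i.e., $\mathrm{ad}(x)|_{L'}=0$ for every $x\in\mathrm{Im}(J-\Id_L)$. I would then combine this with the trivial observation that, for any $y\in L$, the operator $\mathrm{ad}(y)$ maps $L$ into $L'=[L,L]$ by the very definition of the derived algebra. Consequently, for $x\in\mathrm{Im}(J-\Id_L)$ and arbitrary $y\in L$, the composition $\mathrm{ad}(x)\circ\mathrm{ad}(y)$ first sends $L$ into $L'$ and is then killed by $\mathrm{ad}(x)$; hence $\mathrm{ad}(x)\mathrm{ad}(y)=0$ as an operator on $L$.

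Taking traces gives $K(x,y)=\mathrm{tr}(\mathrm{ad}(x)\mathrm{ad}(y))=0$ for every $y\in L$, and in particular $K(x,[y,z])=0$ for all $y,z\in L$. Invoking the characterization $R=\{x\in L\mid K(x,[y,z])=0\ \forall y,z\in L\}$ used in Lemma~\ref{LemmaOnInvarienceOfRad}, we conclude $x\in R$, and therefore $\mathrm{Im}(J-\Id_L)\subseteq R$, as required.

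There is essentially no substantial obstacle here: once the preceding lemma is in hand, the argument reduces to a one-line composition observation followed by appeal to the Killing-form description of the radical. The only point one has to keep an eye on is that this description remains available in the ambient setting, i.e., that the underlying field is as in Lemma~\ref{LemmaOnInvarienceOfRad} (characteristic zero suffices, which is the standing hypothesis from the preceding Killing-form-based results).
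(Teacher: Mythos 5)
Your proof is correct, but it follows a genuinely different route from the paper's. The paper argues directly with the Killing form: it combines the invariance of $K$ under the automorphism $J$, namely $K([Jx,Jy],Jz)=K([x,y],z)$, with Lie-orthogonality, $K([Jx,Jy],Jz)=K([x,y],Jz)$, to get $K([x,y],Jz-z)=0$, and then invokes $R=(L')^{\perp}$. You instead reuse the second conclusion of the preceding lemma, $[\mathrm{Im}(J-\Id_L),L']=0$, observe that $\mathrm{ad}(y)$ lands in $L'$ for every $y$, deduce $\mathrm{ad}(x)\,\mathrm{ad}(y)=0$ for $x\in\mathrm{Im}(J-\Id_L)$, and take traces. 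Both arguments ultimately rest on the Cartan characterization $R=(L')^{\perp}$ and hence on characteristic zero, which you correctly flag. What your version buys: it avoids appealing to the (standard but separate) fact that automorphisms preserve the Killing form, and it yields the strictly stronger conclusion $\mathrm{Im}(J-\Id_L)\subseteq L^{\perp}$, the kernel of the Killing form, rather than merely $(L')^{\perp}$. What the paper's version buys: it is self-contained at the level of this lemma (it does not depend on the preceding lemma's second assertion) and generalizes more transparently, since the identity $K([x,y],Jz-z)=0$ is obtained by a one-line manipulation of the two defining properties of $J$. Either proof is acceptable; yours is a legitimate strengthening.
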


\begin{proof}
We fix arbitrary elements $x$, $y$ and $z$ from~$L$.
Using the invariance of the Killing form $K(x, y)$ of the algebra~$L$ 
with respect to each automorphism of~$L$, we derive that $K([Jx,Jy],Jz)=K([x,y],z)$.
At the same time, Lie orthogonality of~$J$ implies that $K([Jx,Jy],Jz)=K([x,y],Jz)$.
Therefore, $K([x,y],Jz-z)=0$, i.e., the image $\mathrm{im} (J-\Id_L)$ is contained 
in the orthogonal complement to the derived algebra~$L'$ with respect to the Killing form~$K$.
As well known, this complement coincides with the radical~$R$.
\end{proof}

Suppose that the underlying field of~$L$ is algebraically closed.
Given a number~$\mu$, let $I_\mu$ denote the ideal of the algebra~$L$ that was introduced in Lemma~\ref{LemmaOnIdealsRelatedToRootSubsp}.

\begin{corollary}\label{CorollaryOnLieOrtAutIdealsNilpotency}
Let $J$ be a Lie-orthogonal automorphism of a finite-dimensional Lie algebra~$L$.
Then for any eigenvalue~$\mu$ of~$J$, which is not equal to one, the corresponding
ideal $I_\mu$ is a nilpotent ideal of nilpotence degree not greater than two.
\end{corollary}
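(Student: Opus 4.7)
The plan is to show directly that $[I_\mu,[I_\mu,I_\mu]]=0$. Two facts specific to the automorphism setting are needed beyond what is used for the preceding (solvability) corollary. First, $J$ being an automorphism is invertible, so $\mu\ne 0$ and the degenerate case $I_\mu=Z$ does not arise. Second, and more importantly, for any eigenvalues $\lambda,\nu$ of $J$ one has $[L_\lambda,L_\nu]\subseteq L_{\lambda\nu}$ (interpreted as zero when $\lambda\nu$ is not an eigenvalue of $J$); this is the familiar compatibility of the generalized eigenspace decomposition with an equivariant bilinear map, and follows from expanding $(J-\lambda\nu)^N[x,y]$ via $J[x,y]=[Jx,Jy]$ together with the nilpotency of $J-\lambda$ on $L_\lambda$ and of $J-\nu$ on $L_\nu$. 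Note that this inclusion is unavailable for a general Lie-orthogonal operator, where only $[Jx,Jy]=[x,y]$ holds, and this is the one place the full automorphism hypothesis is essentially used.

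I then split into cases along the definition of $I_\mu$. If $\mu\ne\pm 1$, the preceding corollary records that $L_\mu$ and $L_{\mu^{-1}}$ are Abelian, and combining this with the centrality of $Z$ gives $[I_\mu,I_\mu]=[L_\mu,L_{\mu^{-1}}]$. By the automorphism containment, $[L_\mu,L_{\mu^{-1}}]\subseteq L_1$. Since both $\mu$ and $\mu^{-1}$ differ from $1$, Lemma~\ref{LemmaOnCommRootSubspEq0} applied to the pairs $(\mu,1)$ and $(\mu^{-1},1)$ gives $[L_\mu,L_1]=[L_{\mu^{-1}},L_1]=0$ (or trivially so if $1$ is not an eigenvalue of $J$). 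Hence $[I_\mu,[I_\mu,I_\mu]]\subseteq[I_\mu,L_1]=0$.

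For $\mu=-1$ the root subspace $L_{-1}$ need not be Abelian, since Lemma~\ref{LemmaOnCommRootSubspEq0} is vacuous for $(\lambda,\mu)=(-1,-1)$. But here $I_{-1}=L_{-1}\oplus Z$, so $[I_{-1},I_{-1}]=[L_{-1},L_{-1}]$; the automorphism inclusion places this bracket in $L_1$, and $[L_{-1},L_1]=0$ by Lemma~\ref{LemmaOnCommRootSubspEq0} since $-1\ne 1$, giving $[I_{-1},[I_{-1},I_{-1}]]=0$ as required. The main obstacle is thus conceptual rather than computational: one must recognize that replacing ``Lie-orthogonal'' by ``Lie-orthogonal automorphism'' upgrades the abelian-plus-abelian decomposition from the previous corollary to a decomposition in which iterated brackets land inside the eigenvalue-$1$ root subspace, where the vanishing lemma can then be applied a second time.
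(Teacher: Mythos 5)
Your argument is correct, but it takes a genuinely different route from the paper's. The paper gets the result in one step from Lemma~\ref{LemmaOnLieOrtAut1}: because $J$ is simultaneously an automorphism and Lie-orthogonal, $J[x,y]=[Jx,Jy]=[x,y]$, so the derived algebra $L'$ lies in $\ker(J-\Id_L)\subseteq L_1\subseteq I_1$; hence $[I_\mu,I_\mu]\subseteq L'\cap I_\mu\subseteq I_1\cap I_\mu=Z$, and centrality of~$Z$ gives nilpotence degree at most two with no case distinction on~$\mu$. You instead invoke the multiplicativity $[L_\lambda,L_\nu]\subseteq L_{\lambda\nu}$ of root subspaces under an automorphism --- a fact the paper never states, though your sketch via $(J-\lambda\nu)^N[x,y]$ is the standard proof of it --- and then apply Lemma~\ref{LemmaOnCommRootSubspEq0} a second time to kill $[I_\mu,L_1]$, splitting into the cases $\mu\ne\pm1$ and $\mu=-1$. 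That is valid (the case analysis is exhaustive since $\mu\ne 0,1$), but it is heavier and yields slightly less: the paper's argument pins $[I_\mu,I_\mu]$ down inside the center~$Z$, which is exactly what gets reused in the subsequent corollary on centerless algebras and in the decomposition lemma, whereas you only locate it inside~$L_1$ and then dispose of the triple bracket. The identity $J[x,y]=[x,y]$ is the precise point where Lie-orthogonality and the automorphism property interact, and exploiting it directly is what collapses the proof to one line.
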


\begin{proof}
As $I_\mu$ is an ideal of~$L$, the commutator $[I_\mu, I_\mu]$ is contained in $I_\mu$.
Moreover, $[I_\mu, I_\mu]\subset L'$ by the definition of $L'$ and $L'\subset I_1$ in view of Lemma~\ref{LemmaOnLieOrtAut1}.
Hence $[I_\mu, I_\mu]\subset I_\mu\cap I_1=Z$, where~$Z$ is the center of~$L$, 
i.e., the nilpotence degree of~$I_\mu$ is not greater than two.
\end{proof}

\begin{corollary}
All eigenvalues of any Lie-orthogonal automorphism of a centerless finite-dimensional Lie algebra are equal to one.
\end{corollary}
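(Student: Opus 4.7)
The plan is to derive the conclusion directly from Corollary~\ref{CorollaryOnLieOrtAutIdealsNilpotency} together with Lemma~\ref{LemmaOnCommRootSubspEq0}. Since the underlying field is algebraically closed and $J$ is invertible (as an automorphism), the algebra decomposes into the direct sum of generalized eigenspaces, $L=\bigoplus_\nu L_\nu$, where $\nu$ runs over the nonzero eigenvalues of $J$. I will argue by contradiction, assuming that some eigenvalue $\mu\neq 1$ exists, and show that every $x\in L_\mu$ commutes with all of $L$. Since $L$ is centerless, this forces $L_\mu=0$, contradicting the choice of $\mu$.

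The first step is to observe that the ideal $I_\mu$ from Lemma~\ref{LemmaOnIdealsRelatedToRootSubsp} is Abelian in our situation. Indeed, Corollary~\ref{CorollaryOnLieOrtAutIdealsNilpotency} asserts that the nilpotence degree of $I_\mu$ is at most two, which means $[I_\mu,I_\mu]\subset Z$; since $Z=0$ by assumption, we obtain $[I_\mu,I_\mu]=0$. The second step is a case split on the eigenvalue $\nu$ against which we bracket a given $x\in L_\mu$. Whenever $\mu\nu\neq 1$, Lemma~\ref{LemmaOnCommRootSubspEq0} immediately yields $[x,L_\nu]=0$. The only remaining possibility is $\nu=\mu^{-1}$, and for this case we use that both $L_\mu$ and $L_{\mu^{-1}}$ are contained in $I_\mu$ by the definition of that ideal, so $[x,L_{\mu^{-1}}]\subset[I_\mu,I_\mu]=0$. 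Summing over all eigenvalues $\nu$ gives $[x,L]=0$, so $x\in Z=0$, as required.

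I do not anticipate any serious obstacle: the bulk of the work is already packaged in Corollary~\ref{CorollaryOnLieOrtAutIdealsNilpotency} and Lemma~\ref{LemmaOnCommRootSubspEq0}. The only bookkeeping worth doing carefully is the degenerate case $\mu=-1$, where $\mu^{-1}=\mu$ and $I_{-1}=L_{-1}$; here the argument still goes through because $L_{-1}$ being Abelian gives $[x,L_{-1}]=0$ for $x\in L_{-1}$, while every other eigenvalue $\nu$ automatically satisfies $\mu\nu=-\nu\neq 1$ and is covered by Lemma~\ref{LemmaOnCommRootSubspEq0}.
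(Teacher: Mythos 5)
Your argument is correct and is essentially the paper's own proof: both rest on combining Corollary~\ref{CorollaryOnLieOrtAutIdealsNilpotency} (so that $[I_\mu,I_\mu]\subseteq Z=0$) with Lemma~\ref{LemmaOnCommRootSubspEq0} and the decomposition of $L$ into root subspaces to conclude that $I_\mu$ is central, hence zero. The only cosmetic difference is that you argue element-wise with $x\in L_\mu$ while the paper phrases the same reasoning directly at the level of the ideal $I_\mu$.
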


\begin{proof}
Let $J$ be a Lie-orthogonal automorphism on a centerless finite-dimensional Lie algebra~$L$.
As an automorphism, the operator~$J$ has no zero eigenvalues.
We fix a number $\nu$ that is not zero or one.
Consider the corresponding ideal~$I_\nu$.
Analogously to the previous corollary, $[I_\nu, I_\nu]\subset Z$.
As $[I_\nu, I_\mu]=\{0\}$ for any eigenvalue $\mu$ that is not equal to $\nu$ or $\nu^{-1}$, 
and the algebra~$L$ is a sum of the ideals~$I_\mu$, where the subscript $\mu$ 
runs through the set of eigenvalues of the operator~$J$, the ideal~$I_\nu$ is contained in~$Z$.
Therefore, $I_\nu=\{0\}$, i.e., $\nu$ is not an eigenvalue of~$J$.
\end{proof}

\begin{corollary}
Let $J$ be a Lie-orthogonal automorphism on a finite-dimensional Lie algebra~$L$.
Then $L$ can be represented as a sum of two ideals $I_1+\hat I$, 
where the restriction of $J$ on $I_1$ has only unit eigenvalues and
$\hat I$ is a nilpotent ideal of nilpotence degree not greater than two.
\end{corollary}

\section{Lie-orthogonal operators on Lie algebras with nonzero\\ Levi factor}\label{SectionOnLieOrtOpsOnNonzeroLeviFactor}

\subsection{Semi-simple algebras}

The rigid structure of semi-simple Lie algebras imposes strong restrictions on the corresponding sets of Lie-orthogonal operators.

\begin{lemma}\label{LemmaOnAnnPoly}
The polynomial $t^2-1$ annihilates any Lie-orthogonal operator on a semi-simple finite-dimensional Lie algebra.
\end{lemma}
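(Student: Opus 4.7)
The plan is to reduce $J^2=\mathrm{Id}$ to the case of a unipotent Lie-orthogonal operator on a semi-simple Lie algebra and then extinguish its logarithm by combining the infinitesimal form of Lie-orthogonality with a Killing-form intertwining identity.

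By Corollary~\ref{CorollaryEigenJOnSemisimpleAlg} I would first write $L=L_1\oplus L_{-1}$ for the $\pm 1$-root subspaces of $J$, which are semi-simple ideals with $[L_1,L_{-1}]=0$ by Lemma~\ref{LemmaOnCommRootSubspEq0}. The conclusion $J^2=\mathrm{Id}$ is equivalent to $J|_{L_1}=\mathrm{Id}_{L_1}$ and $J|_{L_{-1}}=-\mathrm{Id}_{L_{-1}}$. Replacing $J|_{L_{-1}}$ by the Lie-orthogonal operator $-J|_{L_{-1}}$, the problem reduces to showing that any Lie-orthogonal operator $J$ on a semi-simple Lie algebra whose only generalized eigenvalue is $1$ must equal the identity.

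For this reduced claim $J$ is unipotent, so $D:=\log J$ is a well-defined nilpotent operator with $J=\exp(D)$. Since every $J^n$ is Lie-orthogonal and the condition $[J_tx,J_ty]=[x,y]$ for $J_t:=\exp(tD)$ is polynomial in $t$ (as $D$ is nilpotent), it holds for all $t$; differentiating at $t=0$ gives the infinitesimal identity $[Dx,y]+[x,Dy]=0$. Separately, applying $K$-associativity to $[Jx,Jy]=[x,y]$ together with $\mathrm{ad}(Jy)=\mathrm{ad}(y)J^{-1}$ produces the intertwining relation $J^*\mathrm{ad}(y)=\mathrm{ad}(y)J$, where $J^*$ denotes the Killing-form adjoint of $J$; exponentiating down yields $D^*\mathrm{ad}(y)=\mathrm{ad}(y)D$, i.e.\ $D^*[y,z]=[y,Dz]$.

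Swapping $y$ and $z$ in the last identity and using bracket antisymmetry produces the second expression $D^*[y,z]=[Dy,z]$, whence $[y,Dz]=[Dy,z]$. Combining with the infinitesimal Lie-skew identity $[Dy,z]=-[y,Dz]$ gives $2[y,Dz]=0$, so $Dz$ lies in the center of $L$, which vanishes by semi-simplicity. Therefore $D=0$ and $J=\mathrm{Id}$, closing the reduced case and the lemma.

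The main obstacle is the clean derivation and correct use of the Killing-form intertwining identity $J^*\mathrm{ad}(y)=\mathrm{ad}(y)J$ — a short but careful computation from $K$-associativity and Lie-orthogonality in the form $\mathrm{ad}(Jy)=\mathrm{ad}(y)J^{-1}$; once this is in hand alongside the infinitesimalisation, antisymmetry of the bracket and triviality of the centre finish the argument in two lines.
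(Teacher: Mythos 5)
Your argument is correct, but it is a long detour around what is essentially the paper's one identity. The crux of your proof is the intertwining relation $J^*\,\mathrm{ad}(y)=\mathrm{ad}(y)\,J$, which (unwinding the Killing-form adjoint and using nondegeneracy of $K$) is exactly equivalent to the identity $[Jx,y]=[x,Jy]$ that the paper establishes; it does follow from the ingredients you name, but only if you also use the skew-adjointness of $\mathrm{ad}$ with respect to $K$ (i.e.\ the antisymmetry of the bracket): taking the $K$-adjoint of $\mathrm{ad}(Jy)=\mathrm{ad}(y)J^{-1}$ gives $-\mathrm{ad}(Jy)=-(J^*)^{-1}\mathrm{ad}(y)$, and combining the two expressions for $\mathrm{ad}(Jy)$ yields your relation. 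A naive application of associativity alone gives only $J^*\,\mathrm{ad}(y)\,J=\mathrm{ad}(y)$, so be sure to write out this step — it is the entire content of the lemma. Once you have $[Jx,y]=[x,Jy]$, however, the paper finishes in one line: $[x,y]=[Jx,Jy]=[x,J^2y]$, so $J^2y-y$ is central, hence zero. Your additional scaffolding — the $L_1\oplus L_{-1}$ reduction, the logarithm $D=\log J$, the one-parameter family $\exp(tD)$ and the infinitesimal identity $[Dx,y]+[x,Dy]=0$ — is all sound (the polynomial-in-$t$ argument and the final combination $2[y,Dz]=0$ check out), but it is superfluous, and it imports hypotheses the paper's direct proof does not need: the root-subspace decomposition via Corollary~\ref{CorollaryEigenJOnSemisimpleAlg} presumes an algebraically closed field, and the logarithm presumes characteristic~$0$, whereas the Killing-form computation works whenever $K$ is nondegenerate. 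What your route buys is a clean conceptual picture (a unipotent Lie-orthogonal operator has a Lie-skew, $K$-self-intertwining logarithm, which must vanish on a centerless algebra); what it costs is length and generality.
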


\begin{proof}
Let $K(x, y)$ be the Killing form of a semi-simple Lie algebra~$L$ and $J$ be a Lie-orthogonal operator on~$L$.
By the definition of Lie orthogonality and in view of the associativity of the Killing form with respect to Lie bracket \cite{Humphreys2000}, 
for arbitrary $x$, $y$ and $z$ from $L$ we have the following equalities:
\begin{gather*}
K([Jx, Jy], Jz)=K(Jx, [Jy, Jz])=K(Jx, [y, z])=K([Jx, y], z),\\[.5ex]
K([Jx, Jy], Jz)=-K([Jy, Jx], Jz)=-K(Jy, [Jx, Jz])=\\ 
=-K(Jy, [x, z])=-K([Jy, x], z)=K([x, Jy], z).
\end{gather*}
Therefore, $K([Jx, y], z)=K([x, Jy], z)$, i.e., $K([Jx, y]-[x, Jy], z)=0$.
As the element~$z$ is arbitrary and the form~$K$ is nondegenerate as the Killing form of a semi-simple algebra, we have
$[Jx, y]-[x, Jy]=0$, i.e., $[Jx, y]=[x, Jy]$ for any~$x$ and~$y$ from~$L$.
Combining this property with the definition of Lie-orthogonal operators implies
$[x, y]=[Jx, Jy]=[x, J^2y]$, whence $[x, J^2y-y]=0$. 
As the algebra~$L$ is centerless and the element~$x$ is arbitrary, we get $J^2y-y=0$ for all~$y$ from~$L$.
This means that $J^2-\Id_L=0$. In other words, the polynomial $t^2-1$ annuls~$J$.
\end{proof}

It follows from Lemma~\ref{LemmaOnAnnPoly} that each eigenvalue of the operator~$J$ is equal to either~$1$ or~$-1$.
This agrees with Corollaries~\ref{CorollaryEigenJOnSimpleAlg} and~\ref{CorollaryEigenJOnSemisimpleAlg}.
At the same time, the statement of this lemma is much stronger: 
It guarantees the existence of a basis of~$L$, which is formed by eigenvectors of~$J$.
Taking into account Corollary~\ref{CorollaryEigenJOnSemisimpleAlg}, we obtain the following assertions:

\begin{theorem}\label{TheoremOnLieOrtOpsOfSimpleAlg}
Lie-orthogonal operators on any simple finite-dimensional Lie algebra are exhausted by the trivial operators $\Id_L$ and~$-\Id_L$.
\end{theorem}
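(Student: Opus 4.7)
The plan is to deduce the theorem almost immediately from the preceding material, in particular from Lemma~\ref{LemmaOnAnnPoly} (which tells us $J^2=\Id_L$ for any Lie-orthogonal operator~$J$ on a semi-simple algebra) together with the ideal structure provided by Lemma~\ref{LemmaOnIdealsRelatedToRootSubsp} and Corollary~\ref{CorollaryEigenJOnSemisimpleAlg}. The idea is that a simple algebra has only the two trivial ideals, so the natural eigenspace decomposition of~$J$ must be degenerate.

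First, I would fix a simple finite-dimensional Lie algebra~$L$ (which is in particular semi-simple, so the previous lemma applies) and an arbitrary Lie-orthogonal operator~$J$ on~$L$. Lemma~\ref{LemmaOnAnnPoly} gives $J^2=\Id_L$, so the minimal polynomial of~$J$ divides $x^2-1$, which splits into distinct linear factors. Hence~$J$ is diagonalizable and $L=L_1\dotplus L_{-1}$ as a vector-space direct sum, where $L_{\pm 1}=\ker(J\mp \Id_L)$ are the root (in fact eigen-) subspaces.

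Next I would observe that each of $L_1$ and $L_{-1}$ is an ideal of~$L$. One way is to invoke Corollary~\ref{CorollaryEigenJOnSemisimpleAlg} directly; alternatively, I would apply Lemma~\ref{LemmaOnIdealsRelatedToRootSubsp} with $\lambda=\pm 1$, noting that the center~$Z$ of a simple algebra is trivial so that $I_{\pm 1}=L_{\pm 1}$. Since $L$ is simple, its only ideals are $\{0\}$ and~$L$ itself, so the decomposition $L=L_1\dotplus L_{-1}$ forces exactly one of the summands to vanish. Depending on which summand is zero, we conclude $J=\Id_L$ or $J=-\Id_L$.

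Because all of the real work has been pushed into the earlier lemmas (most crucially the annihilating-polynomial lemma, whose proof used the nondegeneracy of the Killing form), there is no substantive obstacle remaining at this stage; the theorem is essentially a three-line corollary. The only point that requires a moment of care is verifying that $L_{\pm 1}$ are genuinely ideals rather than merely invariant subspaces, and this is exactly what Lemma~\ref{LemmaOnIdealsRelatedToRootSubsp} supplies once $Z=\{0\}$ is noted.
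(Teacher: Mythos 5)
Your proof is correct and follows essentially the same route as the paper, which likewise derives the theorem by combining Lemma~\ref{LemmaOnAnnPoly} (giving $J^2=\Id_L$ and hence diagonalizability with eigenvalues $\pm1$) with the fact that the resulting root subspaces are ideals, so that simplicity forces one of them to vanish. Your observation that $Z=\{0\}$ makes $I_{\pm1}=L_{\pm1}$ genuine ideals is exactly the content the paper draws from Corollaries~\ref{CorollaryEigenJOnSimpleAlg} and~\ref{CorollaryEigenJOnSemisimpleAlg}.
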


\begin{theorem}\label{TheoremOnLieOrtOpsOfSemisimpleAlg}
Let $L$ be a semi-simple finite-dimensional Lie algebra and $L=L_1\oplus \dots \oplus L_k$ be its decomposition into simple components.
Then any Lie-orthogonal operator $J$ on~$L$ can be represented in the form
\[
J=J_1\oplus \dots \oplus J_k,
\]
where for any $i=1,\dots,k$ we have either $J_i=\Id_{L_i}$ or $J_i=-\Id_{L_i}$.
\end{theorem}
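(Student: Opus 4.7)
The plan is to combine the annihilation statement from Lemma~\ref{LemmaOnAnnPoly} with the eigenspace analysis of Corollary~\ref{CorollaryEigenJOnSemisimpleAlg}. Lemma~\ref{LemmaOnAnnPoly} says that $J^2=\Id_L$, so the minimal polynomial of $J$ divides $x^2-1$ and $J$ is diagonalizable over the ground field with spectrum contained in $\{1,-1\}$. Consequently the root subspaces from Corollary~\ref{CorollaryEigenJOnSemisimpleAlg} are genuine eigenspaces, and $L=L_1\dotplus L_{-1}$ as vector spaces.

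Next I would invoke Corollary~\ref{CorollaryEigenJOnSemisimpleAlg}: every simple component $L_i$ is contained in one of the two eigenspaces. Define $\varepsilon_i=1$ if $L_i\subseteq L_1$ and $\varepsilon_i=-1$ if $L_i\subseteq L_{-1}$. Then the restriction $J_i:=J|_{L_i}$ satisfies $J_i=\varepsilon_i\Id_{L_i}$, i.e., $J_i$ is one of the two trivial Lie-orthogonal operators on the simple component $L_i$ (this is consistent with Theorem~\ref{TheoremOnLieOrtOpsOfSimpleAlg}, which could alternatively be invoked directly once one knows that each $L_i$ is $J$-invariant).

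Finally, since every $L_i$ is $J$-invariant and $L=L_1\oplus\dots\oplus L_k$ as a direct sum of ideals, the operator $J$ decomposes as $J=J_1\oplus\dots\oplus J_k$, proving the theorem. As an alternative route one could instead apply Lemma~\ref{LemmaOnOpOnSumIdealsEqualsSumOp} (semi-simple algebras are centerless) to obtain the decomposition $J=J_1\oplus\dots\oplus J_k$ with each $J_i$ Lie-orthogonal on the simple ideal $L_i$, and then invoke Theorem~\ref{TheoremOnLieOrtOpsOfSimpleAlg} to conclude $J_i=\pm\Id_{L_i}$.

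There is really no hard step: everything substantial has already been accumulated. The only point to check carefully is that $J^2=\Id_L$ upgrades the merely set-theoretic containment ``$L_i$ lies in a root subspace'' from Corollary~\ref{CorollaryEigenJOnSemisimpleAlg} to the sharper conclusion ``$J$ acts on $L_i$ as $\pm\Id_{L_i}$'', which is immediate from diagonalizability.
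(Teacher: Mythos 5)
Your proposal is correct and follows essentially the same route as the paper: the paper also derives the theorem by combining Lemma~\ref{LemmaOnAnnPoly} (so that $J^2=\Id_L$ and $J$ is diagonalizable with spectrum in $\{1,-1\}$) with Corollary~\ref{CorollaryEigenJOnSemisimpleAlg} (each simple component lies in one of the two root subspaces, which are now genuine eigenspaces). The alternative route you sketch via Lemma~\ref{LemmaOnOpOnSumIdealsEqualsSumOp} and Theorem~\ref{TheoremOnLieOrtOpsOfSimpleAlg} is also valid but is not the one the paper uses.
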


In other words, any Lie-orthogonal operator~$J$ leads to the partition of the semi-simple algebra~$L$ into two ideals.
The operator~$J$ acts on one of the ideals as~$\Id$ and on the other as~$-\Id$.

Theorem~\ref{TheoremOnLieOrtOpsOfSemisimpleAlg} implies that 
$(0,1,1)$-differentiations of a semi-simple finite-dimensional Lie algebra are exhausted by the zero differentiation. 
This agrees with Lemma~6.1 from~\cite{Leger&Luks2000}.

\subsection{Reductive algebras}

Combining Lemma~\ref{LemmaOnOpOnSumIdealsEqualsSumOp} with Theorem~\ref{TheoremOnLieOrtOpsOfSemisimpleAlg}, we can 
directly obtain the complete description of Lie-orthogonal operators on reductive Lie algebras.

\begin{corollary}\label{CorollaryOnLieOrtOpsOfReductiveAlg}
Let $L$ be a finite-dimensional reductive Lie algebra and $L=Z\oplus L_1\oplus \dots \oplus L_k$ be its decomposition into the center and simple components.
An operator~$J$ is Lie-orthogonal on~$L$ if and only if it can be represented in the form
\[
J=J_0\oplus J_1\oplus \dots \oplus J_k+J_Z,
\]
where for each $i=1,\dots,k$ we have either $J_i=\Id_{L_i}$ or $J_i=-\Id_{L_i}$, $J_0$ is the identically zero operator on~$Z$ 
and $J_Z$ is an arbitrary operator on~$L$ whose image is contained in the center~$Z$.
\end{corollary}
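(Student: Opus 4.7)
My plan is to derive this corollary by combining Lemma~\ref{LemmaOnOpOnSumIdealsEqualsSumOp}, applied to the reductive decomposition $L=Z\oplus L_1\oplus\dots\oplus L_k$, with Theorem~\ref{TheoremOnLieOrtOpsOfSimpleAlg}, which forces the classification on each simple factor. The only subtlety is that $Z$ is itself a summand, so Lemma~\ref{LemmaOnOpOnSumIdealsEqualsSumOp} produces the decomposition only up to the equivalence relation introduced in Section~\ref{SectionOnInvIdealsOfLieOrtOps}, and this ambiguity must be repackaged cleanly into the additive term $J_Z$ of the claimed form.

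For sufficiency, first observe that $0_Z\oplus J_1\oplus\dots\oplus J_k$ is Lie-orthogonal on $L$: each $J_i=\pm\Id_{L_i}$ is a trivial Lie-orthogonal operator on the simple ideal $L_i$, and the zero operator is Lie-orthogonal on the abelian ideal $Z$, so the direct-sum proposition from Section~\ref{SectionOnElemPropsLieOrtOps} applies. Finally, since $\mathrm{Im}\,J_Z\subseteq Z$, Lemma~\ref{LemmaOnIndeterminacyOfLieOrtOpsOnCenter2} guarantees that adding $J_Z$ preserves Lie-orthogonality.

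For necessity, let $J$ be an arbitrary Lie-orthogonal operator on $L$. Applying Lemma~\ref{LemmaOnOpOnSumIdealsEqualsSumOp} to the decomposition $L=Z\oplus L_1\oplus\dots\oplus L_k$ yields Lie-orthogonal operators $J'_0$ on $Z$ and $J_i$ on $L_i$ such that $J$ is equivalent to $J'_0\oplus J_1\oplus\dots\oplus J_k$, meaning their difference has image in $Z$. Each $L_i$ is simple and finite-dimensional, so Theorem~\ref{TheoremOnLieOrtOpsOfSimpleAlg} forces $J_i=\pm\Id_{L_i}$. The operator $J'_0$, extended by zero outside $Z$, already has image inside $Z$, so we can absorb it together with the equivalence discrepancy into a single operator $J_Z$ on $L$ with $\mathrm{Im}\,J_Z\subseteq Z$. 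This gives precisely $J=0_Z\oplus J_1\oplus\dots\oplus J_k+J_Z$, as required.

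There is no genuinely new technical step here; the whole argument is a reduction bookkeeping. The point requiring the most care is ensuring that the equivalence supplied by Lemma~\ref{LemmaOnOpOnSumIdealsEqualsSumOp} and the arbitrary freedom on the center guaranteed by Lemma~\ref{LemmaOnIndeterminacyOfLieOrtOpsOnCenter2} are combined consistently into $J_Z$ rather than double-counted, so that the normalization $J_0\equiv 0$ on $Z$ in the statement is achieved on the nose.
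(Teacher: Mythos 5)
Your proposal is correct and follows essentially the same route as the paper, which obtains the corollary by combining Lemma~\ref{LemmaOnOpOnSumIdealsEqualsSumOp} with the classification on the (semi-)simple part; your use of Theorem~\ref{TheoremOnLieOrtOpsOfSimpleAlg} factor-by-factor instead of Theorem~\ref{TheoremOnLieOrtOpsOfSemisimpleAlg} is an immaterial difference. The bookkeeping that absorbs the equivalence discrepancy and the central component into $J_Z$ is handled correctly.
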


\subsection{Levi factor and generalized eigenspaces of operators}\label{SectionOnLeviFactorAndLieOrtOps}

Suppose that the underlying field is of characteristic~$0$ and algebraically closed.
In view of Lemma~\ref{LemmaOnIdealsRelatedToRootSubsp}, a Lie algebra~$L$ possessing a Lie-orthogonal operator~$J$
can be represented as the sum of the ideals~$I_\lambda$, each of which has either the form $I_\lambda=L_\lambda\oplus L_{\lambda^{-1}}\oplus Z$ 
if $\lambda\notin\{\pm 1, 0\}$ or the form $I_\lambda=L_\lambda\oplus Z$ if $\lambda=\pm 1$.
Here~$\lambda$ runs through the set of nonzero eigenvalues of the operator~$J$. If~$\lambda$ and~$\lambda^{-1}$ are simultaneously eigenvalues of~$J$, 
we omit the ideal $I_{\lambda^{-1}}$ from the sum in order to avoid a repetition. This sum is not direct in the general case 
since each ideal~$I_\lambda$ contains the center~$Z$ of~$L$.
Nevertheless, these ideals can be separately studied, up to the equivalence, 
with respect to the restrictions imposed on their structure by the operator~$J$.
This is possible because these ideals are invariant subspaces of the operator~$J$ 
and additionally $[I_\lambda, I_\mu]=0$ when $\lambda\ne\mu$ and $\mu\lambda\ne 1$.

Each ideal~$I_\lambda$, where $\lambda\ne\pm 1$, is solvable and hence is contained in the radical~$R$ of the algebra~$L$.
Therefore, only the ideals $I_1$ and~$I_{-1}$ may have nonzero intersections with a Levi factor of~$L$.
Due to the involution $J\to -J$ on the set of Lie-orthogonal operators, it suffices to study the ideal~$I_1$.

The intersections of the radical~$R$ and a Levi factor~$S$ of the algebra~$L$ with~$I_1$ are
the radical~$R_1$ and a Levi factor~$S_1$ of~$I_1$, respectively, see, e.g., Corollary~4 from~\cite{Bourbaki1975}. 
Note that $S=S_1\oplus S_{-1}$, where $S_{-1}=S\cap I_{-1}$.
Lemma~\ref{LemmaOnInvarienceOfRad} implies that the radical~$R_1$ is invariant with respect to the operator~$J$ 
and hence with respect to the restriction of~$J$ on~$I_1$, too.
The factor-algebra of the ideal~$I_1$ with respect to its radical~$R_1$ is isomorphic to the subalgebra~$S_1$ and is obviously semi-simple.
As the radical~$R_1$ is invariant with respect to~$J$, 
the factorization of the restriction~$J_1$ of the operator~$J$ on~$I_1$ with respect to~$R_1$ gives 
the well-defined operator~$J_1/R_1$ on the factor-algebra $I_1/R_1$ with a single eigenvalue equal to~1.
In view of Theorem~\ref{TheoremOnLieOrtOpsOfSemisimpleAlg}, the factorized operator~$J_1/R_1$ acts identically.
Therefore, the image of the operator~$J_1-\Id_{I_1}$ is contained in~$R_1$, and this operator is nilpotent by the definition of~$I_1$ up to the equivalence relation.
We sum up the obtained result as the following assertion:

\begin{proposition}\label{PropositionOnLeviFactorsAndJEqIdPlusNil}
Let $J$ be a Lie-orthogonal operator on a finite-dimensional Lie algebra~$L$ with the eigenvalue $\lambda=1$ and $I_1$ be the ideal corresponding to this eigenvalue.
Then the restriction~$J_1$ of the operator~$J$ on the ideal~$I_1$ can be represented, 
up to equivalence of Lie-orthogonal operators on~$I_1$, in the form $J_1=\Id_{I_1}+N$, 
where $N$ is a nilpotent operator on~$I_1$ and the image of~$N$ is contained in the radical~$R_1$ of the ideal~$I_1$.
\end{proposition}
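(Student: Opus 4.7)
The plan is to combine the Levi--Mal'tsev decomposition of the ideal $I_1$ with Theorem~\ref{TheoremOnLieOrtOpsOfSemisimpleAlg} on the semi-simple case. First I would write $I_1=R_1\dotplus S_1$ as a vector space, where $R_1=R\cap I_1$ is the radical of $I_1$ and $S_1$ is a Levi factor. By Lemma~\ref{LemmaOnInvarienceOfRad}, the radical $R_1$ is invariant under $J_1:=J|_{I_1}$, so $J_1$ descends to a Lie-orthogonal operator $J_1/R_1$ on the semi-simple factor-algebra $I_1/R_1\simeq S_1$ (the factorization preserves Lie orthogonality, as noted in Section~\ref{SectionOnSpecIdeals}).

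Second, I would argue that $J_1/R_1$ has $1$ as its unique eigenvalue. By the construction of $I_1=L_1\oplus Z$, every generalized eigenvector of $J_1$ lies either in the root subspace $L_1$ (contributing eigenvalue~$1$) or in the center~$Z$; but $Z$ is an Abelian ideal of~$L$, whence $Z\subseteq R\cap I_1=R_1$, so every eigenvalue of $J|_Z$ is killed upon passage to the quotient. Theorem~\ref{TheoremOnLieOrtOpsOfSemisimpleAlg} then shows that $J_1/R_1$ acts as $\pm\Id$ on each simple component of $S_1$, and the eigenvalue constraint forces $J_1/R_1=\Id_{S_1}$. Consequently $\mathrm{Im}(J_1-\Id_{I_1})\subseteq R_1$, which settles the radical-image part of the claim.

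Finally, to obtain nilpotency of $N:=J_1-\Id_{I_1}$ up to equivalence of Lie-orthogonal operators, I would invoke Lemma~\ref{LemmaOnIndeterminacyOfLieOrtOpsOnCenter2}. On $L_1$ the operator $J-\Id$ is nilpotent by the very definition of a generalized eigenspace; on $Z$, however, $J|_Z$ may have arbitrary eigenvalues. Adding to~$J$ an operator $J_0$ with image in~$Z$ defined so that $J_0|_Z=\Id_Z-J|_Z$ produces an equivalent Lie-orthogonal operator $\tilde J=J+J_0$ with $\tilde J|_Z=\Id_Z$, and hence $\tilde J|_{I_1}-\Id_{I_1}$ is nilpotent throughout $I_1$; since $Z\subseteq R_1$, the modification keeps the image inside~$R_1$. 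The main obstacle is really only the second step --- verifying that the semi-simple quotient forces the identity rather than merely one of $\pm\Id$ on each simple factor --- and this is resolved cleanly by the eigenvalue bookkeeping described above. The rest of the argument is functorial assembly of results already established.
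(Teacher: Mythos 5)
Your proof is correct and follows essentially the same route as the paper: pass to the semi-simple quotient $I_1/R_1\simeq S_1$, use the invariance of the radical and Theorem~\ref{TheoremOnLieOrtOpsOfSemisimpleAlg} together with the eigenvalue constraint to force the induced operator to be the identity, and then handle nilpotency up to equivalence via Lemma~\ref{LemmaOnIndeterminacyOfLieOrtOpsOnCenter2}. If anything, your write-up is more explicit than the paper's on the last step (the paper only remarks that nilpotency holds ``by the definition of $I_1$ up to the equivalence relation''), and your construction of $\tilde J$ with $\tilde J|_Z=\Id_Z$ supplies exactly the missing detail.
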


\section{Direct calculation of Lie-orthogonal operators}\label{SectionOnDirectCalcOfLieOrtOps}

For some classes of Lie algebras of simple structure, we can completely describe their Lie-orthogonal operators
using only the definition of Lie-orthogonality and commutation relations in the canonical bases of these algebras.
Such computation is often a necessary step in the study of Lie-orthogonal operators, 
giving a base for making conjectures about general properties of such operators.

\subsection{Special linear algebras}\label{SectionOnDirectCalcOfLieOrtOpsForSL}

For convenience we calculate Lie-orthogonal operators on $\mathrm{gl}_n$ instead of~$\mathrm{sl}_n$.
This is possible because the algebra~$\mathrm{gl}_n$ is a central extension of the algebra~$\mathrm{sl}_n$: $\mathrm{gl}_n=\mathrm{sl}_n\oplus\langle E_n\rangle$, 
where~$E_n$ is the unit matrix of size~$n$, which generates the center~$\langle E_n\rangle$ of the algebra~$\mathrm{gl}_n$.
Hence in view of Lemma~\ref{LemmaOnIndeterminacyOfLieOrtOpsOnCenter2} the operator~$J$ is Lie-orthogonal on the algebra~$\mathrm{gl}_n$
if and only if the operator $PJ|_{\mathrm{sl}_n}$ is Lie-orthogonal on the algebra~$\mathrm{sl}_n$. 
Here $P$ denotes the projection operator from~$\mathrm{gl}_n$ onto $\mathrm{sl}_n$, which is associated with the above decomposition of~$\mathrm{gl}_n$.

As the Lie algebra~$\mathrm{sl}_n$ is simple for any~$n$ and hence any Lie-orthogonal operator on it is nondegenerate, 
it suffices to consider only the corresponding nondegenerate Lie-orthogonal operators on~$\mathrm{gl}_n$.

Let $J$ be such an operator. 
Then the condition of Lie orthogonality can be represented in the form $[Jx, y]=[x, J^{-1}y]$.
We write it down for the matrix units $x=E^{ij}$ and $y=E^{kl}$ of the algebra~$\mathrm{gl}_n$ for fixed values of the indices~$i$, $j$, $k$ and $l$ from $\{1,\dots,n\}$
using the notations $JE^{ij}=A^{ij}=A^{ij}_{pq}E^{pq}$ and $J^{-1}E^{kl}=B^{kl}=B^{kl}_{pq}E^{pq}$.
Here and in what follows we assume summation from~$1$ to~$n$ by the repeated indices $p$ and $q$. 
Therefore, the Lie-orthogonality condition for the matrix units is $[A^{ij}, E^{kl}]-[E^{ij}, B^{kl}]=[A^{ij}, E^{kl}]+[B^{kl}, E^{ij}]=0$.
After expanding $A^{ij}$ and $B^{kl}$ by the basis, we obtain
\[
A^{ij}_{pk}E^{pl}-A^{ij}_{lq}E^{kq}+B^{kl}_{pi}E^{pj}-B^{kl}_{jq}E^{iq}=0.
\]
Considering different possibilities for the values of the indices~$i$, $j$, $k$ and $l$, 
we collect the coefficients of the basis elements in the last equality and equate these coefficients to zero.
As a result, we derive the following system:
\begin{gather*}
k\ne i, \ l\ne j \\
E^{pl}, p\ne i, k\colon \ A^{ij}_{pk}=0,\quad E^{pj}, p\ne i, k\colon \  B^{kl}_{pi}=0,\\
E^{kq}, q\ne j, l\colon \ A^{ij}_{lq}=0,\quad E^{iq}, q\ne j, l\colon \  B^{kl}_{jq}=0,\\
E^{il} \colon \ A^{ij}_{ik}=B^{kl}_{jl},\quad E^{kl} \colon \ A^{ij}_{kk}=A^{ij}_{ll},\\
E^{kj} \colon \ A^{ij}_{lj}=B^{kl}_{ki},\quad E^{ij} \colon \ B^{kl}_{ii}=B^{kl}_{jj};
\\[1ex]
k=i, \ l\ne j \\
E^{pl}, p\ne i\colon \ A^{ij}_{pi}=0,\quad E^{pj}, p\ne i\colon \  B^{kl}_{pk}=0,\\
E^{kq}, q\ne j, l\colon \ A^{ij}_{lq}+B^{kl}_{jq}=0,\\
E^{il} \colon \ A^{ij}_{ii}=A^{ij}_{ll}+B^{kl}_{jl},\quad 
E^{ij} \colon \ B^{kl}_{kk}=A^{ij}_{lj}+B^{kl}_{jj};
\\[1ex]
k\ne i, \ l=j \\
E^{kq}, q\ne j\colon \ A^{ij}_{jq}=0,\quad E^{iq}, q\ne j\colon \  B^{kl}_{lq}=0,\\
E^{pj}, p\ne k, i\colon \ A^{ij}_{pk}+B^{kl}_{pi}=0,\\
E^{kj} \colon \ A^{ij}_{jj}=A^{ij}_{kk}+B^{kl}_{ki},\quad 
E^{ij} \colon \ B^{kl}_{ll}=A^{ij}_{ik}+B^{kl}_{ii};
\\[1ex]
k=i, \ l=j \\
E^{pj}, p\ne i\colon \ A^{ij}_{pi}+B^{kl}_{pk}=0,\quad
E^{iq}, q\ne j\colon \ A^{ij}_{jq}+B^{kl}_{lq}=0,\\
E^{ij} \colon \ A^{ij}_{ii}+B^{kl}_{kk}=A^{ij}_{jj}+B^{kl}_{ll}.
\end{gather*}

As the system is symmetric with respect to~$A^{ij}$ and~$B^{kl}$, 
it suffices to study only the constraints, imposed on~$A^{ij}$.
The system implies that $A^{ij}_{pq}=0$ if $p\ne q$ and $(p, q)\ne (i, j)$; $A^{ij}_{kk}=A^{ij}_{ll}$ if $i\ne j$;
and $A^{ii}_{kk}=A^{ii}_{ll}$ if $k, l\ne i$. This means that $A^{ij}=\lambda_{ij}E^{ij}+\kappa_{ij}E_n$, where $\lambda_{ij}$ and $\kappa_{ij}$ are constants.
Since the values of the operator~$J$ on the basis elements are defined up to adding elements of the center, the constant~$\kappa_{ij}$ can be assumed zero,
i.e., $A^{ij}=JE^{ij}=\lambda_{ij}E^{ij}$.
In other words, each basis element $E^{ij}$ is an eigenvector of the operator~$J$. 
Then for an arbitrary index triple $(i,j,k)$ such that $(i, k)\ne (k, j)$ we obtain
\[[E^{ik}, E^{kj}]=[JE^{ik}, JE^{kj}]=\lambda_{ik}\lambda_{kj}[E^{ik}, E^{kj}].\] 
Hence $\lambda_{ik}\lambda_{kj}=1$ since $[E^{ik}, E^{kj}]\ne 0$.
Therefore, either all $\lambda_{ij}$ equal $1$ or they all equal $-1$.

As a result, we obtain Theorem~\ref{TheoremOnLieOrtOpsOfSimpleAlg} for the particular case $L=\mathrm{sl}_n$.

\subsection{Heisenberg algebras}

Consider the Heisenberg algebra $\mathrm h_n$ for a fixed value of~$n$.
This is a nilpotent Lie algebra of dimension~$2n+1$ and nilpotency degree 2 with one-dimensional center.
We fix a basis $\{e, p_1,\dots, p_n, q_1,\dots, q_n\}$ of the algebra $\mathrm h_n$, in which the nonzero commutation relations take the canonical form
\[
[p_j, q_j]=e
\]
and hence the center of $\mathrm h_n$ is $Z=\langle e\rangle$.
Here and in what follows the indices $i$, $j$ and $k$ run from~$1$ to~$n$.

Let $J$ be a Lie-orthogonal operator on~$\mathrm h_n$ and $\hat J$ be its essential part, i.e.,
$\hat J=(\mathrm PJ)|_{\hat{\mathrm h}}$, where $\hat{\mathrm h}=\langle p_i, q_j\rangle$ and $P$ is the projection operator onto~$\hat{\mathrm h}$ 
in the representation $\mathrm h_n=Z\dotplus \hat{\mathrm h}$.
We denote the matrix of the operator~$\hat J$ in the canonical basis $\{p_1,\dots, p_n, q_1,\dots, q_n\}$ by the same symbol as the operator.
In accordance with the basis partition into $\{p_1, \dots, p_n\}$ and $\{q_1, \dots, q_n\}$, we split the matrix $\hat J$ into blocks:
\[
\hat J=\left(\begin{array}{cc}A&B\\C&D\end{array}\right), 
\]
where $A=(a_{ij})$, $B=(b_{ij})$, $C=(c_{ij})$, $D=(d_{ij})$ are $n\times n$ matrices. Then $\hat Jp_j=p_ia_{ij}+q_ic_{ij}$ and $\hat Jq_k=p_ib_{ik}+q_id_{ik}$. Here and in what follows we assume summation with respect to the repeated index $i$. In view of the definition of Lie-orthogonal operators, we have
\begin{gather*}
[Jp_j, Jq_k]=(a_{ij}d_{ik}-c_{ij}b_{ik})e=\delta_{ik}e, \\
[Jp_j, Jp_k]=(a_{ij}c_{ik}-c_{ij}a_{ik})e=0,\\
[Jq_j, Jq_k]=(b_{ij}d_{ik}-d_{ij}b_{ik})e=0,
\end{gather*}
where $\delta_{jk}$ is the Kronecker delta.
Using the matrix notation these equalities are written in the form \[
A^{\mathrm T}D-C^{\mathrm T}B=E, \quad
A^{\mathrm T}C-C^{\mathrm T}A=0, \quad
B^{\mathrm T}D-D^{\mathrm T}B=0,
\]
where $E\in M_n$ is the unit matrix.
This means that $\hat J\in\mathrm{Sp}_{2n}$. 
Indeed, let \[S=\begin{pmatrix}0&-E\\E&0\end{pmatrix}\] be the matrix of the canonical symplectic form.
Then
\[
\hat J^{\mathrm T}S\hat J=
\begin{pmatrix}
C^{\mathrm T}A-A^{\mathrm T}C&C^{\mathrm T}B-A^{\mathrm T}D\\
D^{\mathrm T}A-B^{\mathrm T}C&D^{\mathrm T}B-B^{\mathrm T}D
\end{pmatrix}
=S.
\]

As a result, the following assertion is true:

\begin{theorem}
An operator is Lie-orthogonal on the Heisenberg algebra~$\mathrm h_n$ if and only if in the canonical basis
$\{e, p_1,\dots, p_n, q_1,\dots, q_n\}$ its matrix takes a form
\[\left(\begin{array}{cc}r&R\\0&\hat J\end{array}\right),\] 
where
$r$~is an arbitrary scalar, $R$~is an arbitrary matrix from $M_{1,2n}$,
$0$ denotes the zero matrix from $M_{2n,1}$ and $\hat J$ is an arbitrary matrix from~$\mathrm{Sp}_{2n}$.
\end{theorem}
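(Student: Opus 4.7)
The plan is to combine the basis approach of Section~\ref{SectionOnElemPropsLieOrtOps} with Lemmas~\ref{LemmaOnInvCenter} and~\ref{LemmaOnIndeterminacyOfLieOrtOpsOnCenter2} applied to the one-dimensional center $Z=\langle e\rangle$ of~$\mathrm h_n$. This will reduce the description of all Lie-orthogonal operators on~$\mathrm h_n$ to the description of their essential parts on the quotient~$\tilde{\mathrm h}=\langle p_1,\dots,p_n,q_1,\dots,q_n\rangle$.

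First, Lemma~\ref{LemmaOnInvCenter} forces $Je\in Z$, i.e., $Je=re$ for some scalar~$r$, which accounts for the required $2n\times 1$ zero block in the lower-left corner of the matrix of~$J$ in the canonical basis. The top-row entries $R\in M_{1,2n}$ record the $e$-components of $Jp_j$ and $Jq_k$; by Lemma~\ref{LemmaOnIndeterminacyOfLieOrtOpsOnCenter2} adjusting these amounts to adding to~$J$ an operator whose image lies in~$Z$, and this operation preserves Lie-orthogonality. Hence $r$ and~$R$ may be chosen freely, and the remaining task is to characterize the admissible essential parts~$\tilde J$.

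Next, I would split $\tilde J$ into the four $n\times n$ blocks $A,B,C,D$ as in the statement, write $Jp_j\equiv a_{ij}p_i+c_{ij}q_i\pmod Z$ and $Jq_k\equiv b_{ik}p_i+d_{ik}q_i\pmod Z$, and apply the defining identity $[Jx,Jy]=[x,y]$ to each pair of basis elements of~$\tilde{\mathrm h}$. Since the only nonzero bracket in~$\mathrm h_n$ is $[p_j,q_j]=e$, the conditions $[Jp_j,Jq_k]=\delta_{jk}e$, $[Jp_j,Jp_k]=0$, $[Jq_j,Jq_k]=0$ collapse to the three $n\times n$ matrix equations
\[
A^{\mathrm T}D-C^{\mathrm T}B=E,\qquad A^{\mathrm T}C-C^{\mathrm T}A=0,\qquad B^{\mathrm T}D-D^{\mathrm T}B=0.
\]

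The final step is to recognize these three identities as the four block entries of the single condition $\tilde J^{\mathrm T}S\tilde J=S$ with $S=\left(\begin{array}{cc}0&-E\\E&0\end{array}\right)$, the canonical symplectic form; this is verified by one direct block multiplication. That gives the forward implication. The converse is immediate from the same calculation: any $\tilde J\in\mathrm{Sp}_{2n}$, together with arbitrary $r$ and~$R$, yields an operator satisfying $[Jx,Jy]=[x,y]$ on every pair of basis vectors, hence on all of~$\mathrm h_n$ by bilinearity. The only place requiring care is the bookkeeping between the blocks of $\tilde J$ and those of $\tilde J^{\mathrm T}S\tilde J$; no essential obstacle is expected.
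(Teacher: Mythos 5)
Your proposal is correct and follows essentially the same route as the paper: the reduction to the essential part $\tilde J$ via the center lemmas, the block computation yielding $A^{\mathrm T}D-C^{\mathrm T}B=E$, $A^{\mathrm T}C-C^{\mathrm T}A=0$, $B^{\mathrm T}D-D^{\mathrm T}B=0$, and the identification of these with $\tilde J^{\mathrm T}S\tilde J=S$. No gaps; the only difference is that you cite Lemmas~\ref{LemmaOnInvCenter} and~\ref{LemmaOnIndeterminacyOfLieOrtOpsOnCenter2} explicitly where the paper relies on its general block-matrix description of operators on algebras with nonzero center.
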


\subsection{Almost Abelian algebras}

A Lie algebra is called almost Abelian if it contains an Abelian ideal of codimension 1.
Each almost Abelian algebra is solvable of solvability degree not greater than two.

Let $L$ be an almost Abelian but not Abelian algebra of dimension~$n$.
We choose a basis in~$L$ such that $e_1$, \dots, $e_{n-1}$ form a basis of an $(n-1)$-dimensional Abelian ideal~$I$ contained in~$L$.
Then the equations $[e_n, e_j]=\sum_{i=1}^{n-1}a_{ij}e_i$ exhaust all nonzero commutation relations of the algebra~$L$.
Here and in what follows the indices $i$, $j$, $i'$ and~$j'$ run from~$1$ to~$n-1$.
We assume summation with respect to repeated indices.
Thus, the nonzero matrix $A=(a_{ij})\in M_{n-1}$ completely defines the almost Abelian algebra~$L$. 
Note that the center of the algebra~$L$ coincides with the kernel of the operator~$A$, which acts on~$I$ and is defined by the matrix~$A$.  
Almost Abelian Lie algebras~$L$ and~$\tilde L$ are isomorphic if the corresponding matrices~$A$ and~$\tilde A$ are similar up to a constant multiplier, 
i.e., there exist a nonzero constant~$\mu$ and a nondegenerate matrix~$S\in M_{n-1}$ such that $\tilde A=\mu S^{-1}AS$.

\begin{proposition}
Let $L$ be an $n$-dimensional almost Abelian but not Abelian Lie algebra with a fixed basis such that
$\langle e_1,\dots, e_{n-1}\rangle$ is an Abelian ideal of~$L$ and $\langle e_1,\dots, e_m\rangle=Z$, where $Z$ is the center of~$L$ and $m=\dim Z<n-1$. 
In the chosen basis the nonzero commutation relations take the form $[e_n, e_j]=a_{ij}e_i$ and thus define the matrix $A=(a_{ij})$, 
where the first~$m$ columns of~$A$ are zero and the rank of~$A$ equals~$n-m$.
An operator~$J$ on~$L$ is Lie-orthogonal if and only if its matrix in the basis $\{e_1, \dots, e_n\}$ has one of the following forms
depending on $\dim Z$:

1. $m=\dim Z<n-2$:
\[
\left(
\begin{array}{ccc}
B_0&B_1&B_2\\
0&\mu E&B_3\\
0&0&\mu^{-1}
\end{array}
\right),
\]
where $B_0$, $B_1$, $B_2$ and~$B_3$ are arbitrary~$m\times m$, $m\times (n-m-1)$, $m\times 1$ and $(n-m-1)\times 1$ matrices, respectively, 
$\mu$ is an arbitrary nonzero constant, $E$~is the unit matrix of size~$n-m-1$ and zeros denote zero matrices of appropriate sizes. 

2. $m=\dim Z=n-2$, i.e., the algebra~$L$ is isomorphic to either $(n-2)\mathfrak g_1\oplus\mathfrak g_2$ or $(n-3)\mathfrak g_1\oplus\mathrm h_3$:
\[
\left(
\begin{array}{cc}
B_0&B_1\\
0&C
\end{array}
\right),
\]
where $B_0$ and $B_1$ are arbitrary $(n-2)\times(n-2)$ and $(n-2)\times 2$ matrices, respectively, $0$ is the zero $2\times 2n$ matrix, 
and $C$ is an arbitrary $2\times 2$ matrix with the determinant equal to one, i.e., $C\in \mathrm{SL}_2$.
\end{proposition}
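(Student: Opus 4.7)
The strategy is to combine the structural lemmas of Section~\ref{SectionOnInvIdealsOfLieOrtOps} with a direct component-wise analysis of the Lie-orthogonality equations in the given basis. The ``if'' direction is a routine check: for an operator of either stated form, the images $Je_1,\dots,Je_{n-1}$ all lie in the Abelian ideal $I=\langle e_1,\dots,e_{n-1}\rangle$, so pairwise brackets among them vanish, while $[Je_n,Je_j]$ picks up the factor $\mu^{-1}\mu=1$ in case~1 (respectively, $\det C=1$ in case~2) times the original bracket $[e_n,e_j]$.

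For the converse, I would first invoke Lemma~\ref{LemmaOnInvCenter} to get $J(Z)\subseteq Z$, which yields the upper block-triangular shape with vanishing bottom-left block, and Lemma~\ref{LemmaOnIndeterminacyOfLieOrtOpsOnCenter2} to observe that the blocks $B_0$, $B_1$, $B_2$---whose rows already map into $Z$---can be reassigned arbitrarily without affecting Lie-orthogonality. So it remains only to constrain the entries $J_{ij}$ with $i>k$.

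Writing $Je_l=J_{ml}e_m$ and using $[e_n,e_j]=a_{ij}e_i$, the condition $[Je_k,Je_l]=[e_k,e_l]$ expands into two families. For $k,l<n$,
\[
J_{nk}(A\tilde J)_{il}=J_{nl}(A\tilde J)_{ik},
\]
and for $k<n$, $l=n$,
\[
J_{nn}(A\tilde J)_{ik}-J_{nk}(AJ_{\cdot n})_i=a_{ik},
\]
where $\tilde J=(J_{ml})_{m,l<n}$ and $J_{\cdot n}=(J_{pn})_{p<n}$.

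The main obstacle is the rank argument separating the two cases. Suppose $J_{nl}\ne 0$ for some $l<n$; the first family then forces each row of $A\tilde J$ to be proportional to the row $(J_{nl})_{l<n}$, and substituting this into the second family expresses each column of $A$ indexed by $k>\dim Z$ as the scalar $J_{nk}$ times a single fixed vector, so $\rank A\le 1$. In case~1 ($k<n-2$, hence the rank of $A$ is at least $2$) this is a contradiction, forcing $J_{nl}=0$ for all $l<n$; the second family then reduces to $J_{nn}A\tilde J=A$, and the injectivity of $A$ on $\langle e_{k+1},\dots,e_{n-1}\rangle$ pins the middle diagonal block of $\tilde J$ to the scalar matrix $\mu E$ with $\mu=J_{nn}^{-1}\ne 0$, while the column $B_3$ remains unconstrained because the $J_{nk}$-terms have vanished. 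In case~2 ($k=n-2$, rank of $A$ equal to $1$) the rank argument no longer excludes nonzero $J_{n,n-1}$ or $J_{n-1,n}$, and the sole nontrivial instance of the second family (at $k=n-1$) reduces to $J_{n-1,n-1}J_{nn}-J_{n-1,n}J_{n,n-1}=1$, which is exactly $\det C=1$.
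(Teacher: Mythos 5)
Your proposal follows essentially the same route as the paper: the same two families of equations \eqref{EqAlmostAbAlg1}--\eqref{EqAlmostAbAlg2}, the same dichotomy on whether some $J_{nl}$ with $l<n$ is nonzero, and the same rank-one argument that forces $J_{nl}=0$ when $\dim Z<n-2$; you merely make explicit the appeal to Lemmas~\ref{LemmaOnInvCenter} and~\ref{LemmaOnIndeterminacyOfLieOrtOpsOnCenter2} that the paper leaves implicit in its block decomposition. One small inaccuracy in your ``if'' direction: in case~2 the images $Je_1,\dots,Je_{n-1}$ need \emph{not} all lie in the Abelian ideal $I$, since the subdiagonal entry of $C$ puts an $e_n$-component into $Je_{n-1}$; the pairwise brackets among them still vanish, but the correct reason is that $Je_1,\dots,Je_{n-2}$ lie in the center $Z$ (and $[Je_{n-1},Je_{n-1}]=0$ trivially), after which $[Je_n,Je_{n-1}]=\det C\,[e_n,e_{n-1}]$ as you state.
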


\begin{proof}
We split the matrix of the operator~$J$ into the blocks with respect to the representation of the algebra~$L$ as the direct sum of the spaces $I=\langle e_1, \dots e_{n-1}\rangle$ and~$\langle e_n \rangle$:
\[
J=
\left(
\begin{array}{cc}
J_{II}&J_{In}\\
J_{nI}&J_{nn}\\
\end{array}
\right).
\]
In view of the definition of Lie-orthogonal operators, we have
\begin{gather*}
0=[e_i, e_j]=[Je_i, Je_j]=[J_{ki}e_k, J_{lj}e_l]=(J_{ni}J_{i'j}-J_{nj}J_{i'i})a_{j'i'}e_{j'},\\
a_{j'j}e_{j'}=[e_n, e_j]=[Je_n, Je_j]=[J_{kn}e_k, J_{lj}e_l]=(J_{nn}J_{i'j}-J_{nj}J_{i'n})a_{j'i'}e_{j'},
\end{gather*}
where indices $k$ and~$l$ run from~$1$ to~$n$, or, in the matrix notation,
\begin{gather}\label{EqAlmostAbAlg1}
J_{ni}(AJ_{II})_{j'j}=J_{nj}(AJ_{II})_{j'i},\\\label{EqAlmostAbAlg2}
J_{nn}(AJ_{II})_{j'j}-J_{nj}(AJ_{jn})_{j'}=a_{j'j}.
\end{gather}
The system~\eqref{EqAlmostAbAlg1} implies that either $J_{nj}=0$ for each~$j$ or there is $j_0$ such that $J_{nj_0}\ne 0$.
We consider these alternatives separately.

1. Suppose that $J_{nj}=0$ for each~$j$. Then it follows from~\eqref{EqAlmostAbAlg2} that $J_{nn}AJ_{II}=A$. As the matrix~$A$ is nonzero, we have $J_{nn}\ne 0$.
We denote~$1/J_{nn}$ by~$\mu$. The system~\eqref{EqAlmostAbAlg2} takes the form $A(J_{II}-\mu E)=0$. 
Therefore, $\ker A$, which coincides with the center~$Z$ of~$L$, contains the image of the operator~$J_{II}-\mu E$.
It proves the first case of the theorem.

2. Suppose that there is such~$j_0$ that $J_{nj_0}\ne 0$.
Then all tuples $((AJ_{II})_{1j}, \dots, (AJ_{II})_{n-1,j}, J_{nj})$ are proportional, i.e.,
$J_{nj}=\lambda_jJ_{nj_0}$ and $(AJ_{II})_{ij}=\lambda_j(AJ_{II})_{ij_0}$ for some constants~$\lambda_j$.
Substituting the obtained expressions into the system~\eqref{EqAlmostAbAlg2}, we obtain that $a_{ij}=\kappa_i\lambda_j$, 
where $\kappa_i=J_{nn}(AJ_{II})_{ij_0}-J_{nj_0}(AJ_{In})_i$. As $A\ne 0$, this means that the rank of the matrix~$A$ equals one.
Up to the choice of basis, it can be assumed that either $A=E^{n-1,n-1}$ or $A=E^{n-2, n-1}$.
In the term of~$\kappa$ and~$\lambda$ we have that $\lambda_1=\dots=\lambda_{n-2}=0$ and hence $j_0=n-1$ and $\lambda_{n-1}=1$.
Moreover, either $\kappa_{n-1}$ or $\kappa_{n-2}$ is equal to one, respectively, and all the other $\kappa$'s are zero.
The equation $J_{nj}=\lambda_jJ_{n, n-1}$ implies that $J_{nj}=0$ for each~$j$ from~$1$ to~$n-2$.
Analogously, the equation $(AJ_{II})_{ij}=\lambda_j(AJ_{II})_{i,n-1}$ for either $i=n-1$ or $i=n-2$ respectively gives that $J_{n-1,j}=0$ for each~$j$ from~$1$ to~$n-2$.
One more equation $J_{n-1,n-1}J_{nn}-J_{n-1,n}J_{n,n-1}=1$ follows from the definition of~$\kappa$.
The above equations form the complete system on the entries of the matrix of~$J$. This leads to the second case of the proposition.
\end{proof}

\subsection{Solvable algebras with nilradicals of minimal dimension}

It was proven by Mubarakzyanov \cite[Theorem 5]{Mubarakzyanov1963} that the dimension of the nilradical~$N$ of an $n$-dimensional solvable Lie algebra~$L$ over a field of characteristic zero is not less than~$n/2$.
If $\dim N=n/2$ in the case of even~$n$ then the algebra~$L$ is the direct sum of
$n/2$~copies of the two-dimensional non-Abelian Lie algebra~$\mathfrak g_2$, $L=(n/2)\mathfrak g_2$ \cite[Theorem 6]{Mubarakzyanov1963}.
Theorem~7 from the same paper~\cite{Mubarakzyanov1963} implies that in the case of odd~$n$ the minimal dimension of the nilradical equals $[n/2]+1=(n+1)/2$ 
and the algebra~$L$ with the nilradical of this dimension can be decomposed into the direct sum
of a single one-dimensional (Abelian) Lie algebra~$\mathfrak g_1$ and $n/2$~copies of the algebra $\mathfrak g_2$, i.e., $L=\mathfrak g_1\oplus[n/2]\mathfrak g_2$. 
In view of Lemma~\ref{LemmaOnOpOnSumIdealsEqualsSumOp} any Lie-orthogonal operator~$J$ on the algebra~$L$ is decomposed into the direct sum of Lie-orthogonal operators
on the above components of the algebra~$L$. 
In the case of odd dimension the algebra~$L$ has the nonzero center $Z=\mathfrak g_1$ and so the decomposition is up to the Lie-orthogonal operators equivalence.
As the algebra~$\mathfrak g_2$ is centerless, the Lie-orthogonal operators on this algebra form a group.
It can be directly calculated (see also~\cite[Example 1]{Bilun&Maksimenko&Petravchuk2011}) that this group coincides with~$\mathrm{SL}_2$.
After taking into account Lemma~\ref{LemmaOnIndeterminacyOfLieOrtOpsOnCenter2}, we obtain the following \mbox{assertion}.
\looseness=1

\begin{proposition}
Suppose that the nilradical of an $n$-dimensional solvable Lie algebra~$L$ is of minimal dimension which is equal to $[n+1]/2$.
If the dimension~$n$ is even, the algebra~ $L$ is isomorphic to the algebra $[n/2]\mathfrak g_2$ and 
the Lie-orthogonal operators on~$L$ form a group isomorphic to the direct product of $n/2$ copies of the group~$\mathrm{SL}_2$.
If $n$ is odd, the algebra~$L$  is isomorphic to $\mathfrak g_1\oplus[n/2]\mathfrak g_2$ and each Lie-orthogonal operator~$J$ on~$L$ can be represented as
$J=J_0\oplus J_1\oplus\dots\oplus J_{[n/2]}+J_Z$. Here~$J_0$ is the zero operator on the center~$Z=\mathfrak g_1$ of~$L$,
$J_i$ is a Lie-orthogonal operator on the $i$th copy of $\mathfrak g_2$ (i.e., its matrix belongs to the group $\mathrm{SL}_2$) 
and $J_Z$ is an operator on~$L$ whose image is contained in~$Z=\mathfrak g_1$.
\end{proposition}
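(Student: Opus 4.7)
The plan is first to fix the structure of $L$ by invoking the Mubarakzyanov bounds cited just before the statement: Theorems~5--7 of~\cite{Mubarakzyanov1963} pin down $L$ as $(n/2)\mathfrak{g}_2$ in the even case and $\mathfrak{g}_1\oplus [n/2]\mathfrak{g}_2$ in the odd case. This purely structural input reduces the remaining task to describing Lie-orthogonal operators on such direct sums.

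Second, I would apply Lemma~\ref{LemmaOnOpOnSumIdealsEqualsSumOp} to split an arbitrary Lie-orthogonal operator $J$ on $L$ into a direct sum of Lie-orthogonal operators on the summands. In the even case $L$ is centerless, so the decomposition is exact; in the odd case the center is $\mathfrak{g}_1\ne 0$, and the decomposition holds only up to the equivalence relation, whose ambiguity is exactly an additive term with image in $Z$ permitted by Lemma~\ref{LemmaOnIndeterminacyOfLieOrtOpsOnCenter2}.

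Third, the main computation is to determine the Lie-orthogonal operators on a single copy of the two-dimensional non-Abelian algebra $\mathfrak{g}_2$. Since $\mathfrak{g}_2$ is centerless, every such operator is invertible by the corollary stating that any Lie-orthogonal operator on a centerless algebra is invertible, and they form a group. Choosing a basis $\{f_1,f_2\}$ with $[f_1,f_2]=f_2$ and writing the operator as a $2\times 2$ matrix with columns $Jf_1=af_1+bf_2$ and $Jf_2=cf_1+df_2$, a direct expansion of the single relation $[Jf_1,Jf_2]=f_2$ collapses to $(ad-bc)f_2=f_2$, i.e.\ $\det J=1$. Hence this group is exactly $\mathrm{SL}_2$, as was also noted in~\cite[Example~1]{Bilun&Maksimenko&Petravchuk2011}. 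Assembling the pieces, with $J_i\in\mathrm{SL}_2$ on each $\mathfrak{g}_2$ summand and, in the odd case, absorbing the (arbitrary) Lie-orthogonal operator on the $\mathfrak{g}_1$ component, which automatically has image in $Z$, into the correction term $J_0$ via Lemma~\ref{LemmaOnIndeterminacyOfLieOrtOpsOnCenter2}, yields the stated formulas in both parity cases.

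I do not expect a substantive obstacle: each ingredient is either already established in the paper or reduces to an elementary $2\times 2$ matrix calculation. The one point that requires care is the bookkeeping in the odd case, ensuring that the representation $J=0\oplus J_1\oplus\dots\oplus J_{[n/2]}+J_0$ captures every Lie-orthogonal operator while remaining consistent with the equivalence freedom on the center absorbed into~$J_0$.
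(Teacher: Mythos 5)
Your proposal is correct and follows essentially the same route as the paper: Mubarakzyanov's theorems fix the structure of~$L$, Lemma~\ref{LemmaOnOpOnSumIdealsEqualsSumOp} splits the operator along the ideal decomposition (exactly in the even case, up to equivalence in the odd case), the direct $2\times2$ computation identifies the Lie-orthogonal operators on $\mathfrak g_2$ with $\mathrm{SL}_2$, and Lemma~\ref{LemmaOnIndeterminacyOfLieOrtOpsOnCenter2} absorbs the central freedom into~$J_0$. The only difference is that you write out the $\det J=1$ calculation that the paper delegates to a citation.
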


\subsection{Low-dimensional non-solvable complex Lie algebras}

Non-solvable complex Lie algebras of dimension~$n\leqslant 5$ are exhausted by algebras $\mathrm{sl}_2$ ($n=3$), 
$\mathrm{sl}_2\oplus\mathfrak g_1$ ($n=4$), 
$\mathrm{sl}_2\oplus2\mathfrak g_1$, $\mathrm{sl}_2\oplus\mathfrak g_2$ and $\mathrm{sl}_2\lsemioplus2\mathfrak g_1$ ($n=5$).

The algebra $L=\mathrm{sl}_2$ is simple, hence in view of Theorem~\ref{TheoremOnLieOrtOpsOfSimpleAlg} all Lie-orthogonal operators on this algebra are exhausted by the trivial ones, $\Id_L$ and~$-\Id_L$.
See also Subsection~\ref{SectionOnDirectCalcOfLieOrtOpsForSL}.
The algebras $\mathrm{sl}_2\oplus\mathfrak g_1$ and $\mathrm{sl}_2\oplus2\mathfrak g_1$ are reductive
and Lie-orthogonal operators on reductive algebras are completely described in Corollary~\ref{CorollaryOnLieOrtOpsOfReductiveAlg}.

The algebra $\mathrm{sl}_2\oplus\mathfrak g_2$ has zero center and is a direct sum of its ideals~$\mathrm{sl}_2$ and $\mathfrak g_2$.
In view of Lemma~\ref{LemmaOnOpOnSumIdealsEqualsSumOp}, any Lie-orthogonal operator on $\mathrm{sl}_2\oplus\mathfrak g_2$ is a direct sum of Lie-orthogonal operators on these ideals.
The sets of Lie-orthogonal operators on~$\mathrm{sl}_2$ and $\mathfrak g_2$ have already been described.
Therefore, the Lie-orthogonal operators on the algebra $\mathrm{sl}_2\oplus\mathfrak g_2$ form a group, which is isomorphic to $\mathbb Z_2\times\mathrm{SL}_2$.

The only remaining algebra is~$L=\mathrm{sl}_2\lsemioplus2\mathfrak g_1$.
The nonzero commutation relations of this algebra in the canonical basis are exhausted by the following:
\begin{gather*}
[e_1, e_2] = 2e_2,\quad
[e_1, e_3] =-2e_3,\quad
[e_2, e_3] =  e_1,\\[.5ex]
[e_1, e_4] =  e_4,\quad
[e_2, e_5] =  e_4,\quad
[e_3, e_4] =  e_5,\quad
[e_1, e_5] = -e_5,\quad
\end{gather*}
i.e., $e_1$, $e_2$ and $e_3$ form a basis of a Levi factor, which is isomorphic to~$\mathrm{sl}_2$, and $e_4$ and~$e_5$ form a basis of the Abelian radical~$R$ of this algebra.
Let $J$ be a Lie-orthogonal operator on~$L$.
The only ideal of the algebra~$L$ containing Levi factor is the very algebra~$L$. Also, note that the algebra~$L$ is centerless.
It follows from the content of Subsection~\ref{SectionOnLeviFactorAndLieOrtOps} that the operator~$J$ has the only eigenvalue, which equals either~$1$ or~$-1$.
Up to involution, we can assume that this eigenvalue is~$1$.
Then Proposition~\ref{PropositionOnLeviFactorsAndJEqIdPlusNil} implies that $J=\Id_L+N$, 
where $N$ is a nilpotent operator whose image is contained in the radical~$R$.
Therefore, the action of the operator~$J$ on the elements of the basis can be represented in the form:
\begin{gather*}
Je_i=e_i+a_ie_4+b_ie_5, \quad i=1,2,3, \\
Je_i=a_ie_4+b_ie_5, \quad i=4,5.
\end{gather*}
We apply the definition of Lie orthogonality to different pairs of basis elements:
\begin{gather*}
[Je_1, Je_2]=2e_2+a_2e_4-b_2e_5-b_1e_4=[e_1, e_2]=2e_2,\\
[Je_1, Je_3]=-2e_3+a_3e_4-b_3e_5-a_1e=[e_1, e_3]=-2e_3,\\
[Je_2, Je_3]=e_1+b_3e_4-a_2e_5=[e_2, e_3]=e_1,\\
[Je_1, Je_4]=a_4e_4-b_4e_5=[e_1, e_4]=e_4,\\
[Je_1, Je_5]=a_5e_4-b_5e_5=[e_1, e_5]=-e_5.
\end{gather*}
Thus we have the equations on the coefficients $b_2=0$, $a_2=b_1$, $a_3=0$, $a_1=-b_3$, $b_3=0$, $a_2=0$  (hence $a_i=b_i=0$, $i=1,2,3$), $a_4=1$, $b_4=0$, $a_5=0$ and $b_5=1$.
Summing up the calculation, we obtain the next proposition.

\begin{proposition}
Lie-orthogonal operators on the Lie algebra $L=\mathrm{sl}_2\lsemioplus2\mathfrak g_1$ are exhausted by the trivial operators~$\Id_L$ and~$-\Id_L$.
\end{proposition}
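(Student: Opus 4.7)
The plan is to combine the structural reductions from Section~\ref{SectionOnLeviFactorAndLieOrtOps} with a direct calculation in the fixed basis. Let $S=\langle e_1,e_2,e_3\rangle\cong\mathrm{sl}_2$ denote the Levi factor of $L$ and $R=\langle e_4,e_5\rangle$ its abelian radical; the action of $S$ on $R$ is the faithful irreducible two-dimensional standard representation. In particular, $L$ is centerless and the only ideal of $L$ properly containing $S$ is $L$ itself, since any $S$-submodule of $R$ is either $0$ or all of~$R$.

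For a Lie-orthogonal operator $J$ on $L$, the radical $R$ is $J$-invariant by Lemma~\ref{LemmaOnInvarienceOfRad}, so the factorized operator $J/R$ on $L/R\cong S$ is Lie-orthogonal on a simple algebra and hence equal to $\pm\Id$ by Theorem~\ref{TheoremOnLieOrtOpsOfSimpleAlg}. Replacing $J$ by $-J$ if necessary I may assume $J/R=\Id$, so that $J=\Id_L+N$ with $\mathrm{Im}\,N\subseteq R$; this is consistent with (and strengthened by) Proposition~\ref{PropositionOnLeviFactorsAndJEqIdPlusNil}, under which $N$ is additionally nilpotent, and no equivalence ambiguity arises because the center of $L$ is zero.

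The rest is the direct computation indicated just above the proposition. Writing
\begin{gather*}
Je_i=e_i+a_ie_4+b_ie_5,\ i=1,2,3,\qquad Je_4=a_4e_4+b_4e_5,\quad Je_5=a_5e_4+b_5e_5,
\end{gather*}
I would impose $[Je_i,Je_j]=[e_i,e_j]$ on the seven nontrivial pairs of basis elements and collect coefficients of $e_4$ and $e_5$. The three identities inside $S$, namely those for $(1,2)$, $(1,3)$ and $(2,3)$, successively force $a_i=b_i=0$ for $i=1,2,3$, using only that $\mathrm{ad}(e_1)$ acts diagonally with distinct eigenvalues $\pm 1$ on $R$ and that $[e_2,e_4]=[e_3,e_5]=0$. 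The two mixed pairs $(1,4)$ and $(1,5)$ then pin down $a_4=b_5=1$ and $a_5=b_4=0$; the remaining pairs are automatically satisfied because the deduced equalities $J|_S=\Id_S$ and $J|_R=\Id_R$ jointly give $J=\Id_L$. Reinstating the involution $J\mapsto -J$ delivers the two advertised possibilities~$\pm\Id_L$.

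I expect no serious obstacle: all conceptual content is already supplied by the general theory of Section~\ref{SectionOnLeviFactorAndLieOrtOps} (factorization through the radical, reduction to identity plus a map into $R$, faithfulness of the $S$-action on $R$), and what remains is the bookkeeping of an overdetermined but manifestly consistent linear system. The only mild care required is to order the bracket identities so that the unknowns eliminate successively, which is achieved by the sequence $(1,2),(1,3),(2,3),(1,4),(1,5)$ above.
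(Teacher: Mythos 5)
Your proof is correct and reaches the conclusion by essentially the same two-stage strategy as the paper: first reduce to $J=\Id_L+N$ with $\mathrm{Im}\,N\subseteq R$ (after the involution $J\mapsto-J$), then run the explicit basis computation, which you set up and resolve exactly as the paper does, with the same elimination order and the same resulting equations. The one genuine difference is the reduction step: the paper routes it through the root-subspace machinery of Subsection~\ref{SectionOnLeviFactorAndLieOrtOps} (the ideals $I_\lambda$, the remark that the only ideal containing the Levi factor is $L$ itself, and Proposition~\ref{PropositionOnLeviFactorsAndJEqIdPlusNil}), whereas you only need that $R$ is $J$-invariant by Lemma~\ref{LemmaOnInvarienceOfRad} and that the factorized operator on $L/R\cong\mathrm{sl}_2$ is $\pm\Id$ by Theorem~\ref{TheoremOnLieOrtOpsOfSimpleAlg}. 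Your route is a little more economical --- nilpotency of $N$ is never actually used in the computation --- and it also gets a detail right that the paper fumbles: $L$ is indeed centerless (the $\mathrm{sl}_2$-module $R$ is the faithful irreducible two-dimensional one, so there are no invariants), so no equivalence ambiguity arises; the paper's aside that $L$ ``has the nonzero center'' appears to be a slip.
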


\section{Conclusion}

In this paper we have studied Lie-orthogonal operators on finite-dimensional Lie algebras over a field of characteristic 0.
One of the main directions of the study is the generalization of results obtained in~\cite{Bilun&Maksimenko&Petravchuk2011,Petravchuk&Bilun2003}
via removing the restrictions on algebras' centers and/or on operator nondegeneracy.
In particular, it has been proved that the center and all elements of the ascending central series of a Lie algebra 
are invariant with respect to any Lie-orthogonal operator on this algebra.
If a finite-dimensional Lie algebra over an algebraically closed field of characteristic 0
possesses a Lie-orthogonal operator whose eigenvalues are not equal to~$\pm 1$ 
then this algebra is solvable of solvability degree 2.
Proofs of a number of existing assertions have been simplified, 
including the lemma stating that the zero generalized eigenspace is contained in the center of the algebra,
the lemma on the commutation of pairs of generalized eigenspaces such that the product of the respective eigenvalues is not equal to~$1$ and 
the lemma on the invariance of an ideal the factor-algebra of which is centerless.

The natural equivalence relation of Lie-orthogonal operators has been introduced. It plays an important role in the entire consideration as many assertions have been formulated up to this equivalence.
In particular, it has been proved that the decomposition of a Lie algebra into the direct sum of the ideals implies the decomposition of any Lie-orthogonal operator on the algebra
into the direct sum of Lie-orthogonal operators on these ideals up to the above equivalence.

Perhaps, the most interesting result of the paper is the complete description of Lie-orthogonal operators on semi-simple Lie algebras.
The Lie-orthogonal operators on a simple Lie algebra are exhausted by the trivial ones, i.e., the identity operator and the minus identity operator.
Then any Lie-orthogonal operator on a semi-simple algebra can be represented as a direct sum of trivial operators on the simple components of the algebra.
This result allowed us to completely describe Lie-orthogonal operators on reductive Lie algebras and to derive certain properties of
Lie-orthogonal operators on algebras with nonzero Levi factors.
Though its proof looks simple, it significantly improves much more complicated Theorem~2 from~\cite{Bilun&Maksimenko&Petravchuk2011} 
which only gives a preliminary description of the group of Lie-orthogonal operators on a semi-simple algebra.

For some classes of Lie algebras, the respective sets of Lie-orthogonal operators have been found via direct calculations.
The list of such classes includes the special linear algebras, the Heisenberg algebras, the almost Abelian algebras and all non-solvable algebras of dimension not higher than five.
The calculation for the special linear algebras plays the test role for applying the basis approach.
The introduced notion of equivalence allowed us to briefly formulate the assertion about Lie-orthogonal operators on a Heisenberg algebra:
the equivalence classes of these operators form the symplectic group of the appropriate dimension (the algebra dimension minus one).
The study of Lie-orthogonal operators on almost Abelian algebras in an important step to the complete description of Lie-orthogonal operators
on low-dimensional algebras since almost Abelian algebras constitute a considerable portion of low-dimensional algebras.
The description of Lie-orthogonal operators on low-dimensional algebras gives necessary material for 
suggesting conjectures on Lie-orthogonal operators with non-trivial Levi--Maltsev decomposition.

Using the procedure of the algebraic closure it is possible to extend results obtained for algebraically closed fields
to fields which are not algebraically closed, although this extension may be nontrivial and hence requires a further investigation.

\subsection*{Acknowledgements}

The author is grateful to Prof.\ Anatoliy Petravchuk for posing of the problem about study of Lie-orthogonal operators, useful discussions and interesting comments.
It is our great pleasure to thank the referees for many useful remarks and suggestions that have considerably improved the paper.
The research was supported by the Austrian Science Fund (FWF), project P20632.

\end{document}